\definecolor{citegreen}{rgb}{0,0.6,0}
\definecolor{refred}{rgb}{0.8,0,0}
\title{The higher-dimensional Chern-Gauss-Bonnet formula for singular conformally flat manifolds}
\author{Reto Buzano and Huy The Nguyen}
\date{}
\providecommand{\abs}[1]{\lvert #1\rvert}
\newcommand{\Rc}{\mathrm{Rc}}
\newcommand{\R}{\mathrm{R}}
\newcommand{\RR}{\mathbb{R}}
\newcommand{\Sph}{\mathbb{S}}
\newcommand{\eps}{\varepsilon}
\newcommand{\Lap}{\triangle}
\newcommand{\vol}{\mathrm{vol}}
\newcommand{\dB}{\partial B}
\def\Xint#1{\mathchoice
   {\XXint\displaystyle\textstyle{#1}}
   {\XXint\textstyle\scriptstyle{#1}}
   {\XXint\scriptstyle\scriptscriptstyle{#1}}
   {\XXint\scriptscriptstyle\scriptscriptstyle{#1}}
   \!\int}
\def\XXint#1#2#3{{\setbox0=\hbox{$#1{#2#3}{\int}$}
     \vcenter{\hbox{$#2#3$}}\kern-.5\wd0}}
\def\dashint{\Xint-}
\DeclareMathOperator{\dist}{dist}
\theoremstyle{plain}
\newtheorem{lemma}{Lemma}[section]
\newtheorem{prop}[lemma]{Proposition}
\newtheorem{thm}[lemma]{Theorem}
\newtheorem{cor}[lemma]{Corollary}
\newtheorem{rem}[lemma]{Remark}
\newtheorem{defn}[lemma]{Definition}
\newtheorem{claim}{Claim}
\numberwithin{equation}{section}
\begin{document}
\maketitle

\begin{abstract}
\noindent
In a previous article, we generalised the classical four-dimensional Chern-Gauss-Bonnet formula to a class of manifolds with finitely many conformally flat ends and singular points, in particular obtaining the first such formula in a dimension higher than two which allows the underlying manifold to have isolated conical singularities. In the present article, we extend this result to all even dimensions $n\geq 4$ in the case of a class of conformally flat manifolds.
\end{abstract}

\section{Introduction}
Among the most fundamental results in differential geometry is the Gauss-Bonnet theorem which relates the Gauss curvature $K_g$ of a closed and smooth Riemannian surface $(M^2,g)$ with its Euler characteristic $\chi(M)$ by the formula
\begin{equation*}
\chi(M) = \frac{1}{2\pi} \int_M K_g \, dV_g.
\end{equation*}
Dropping the assumption that the surface is closed, the formula generally requires correction terms as can already be seen by looking at the plane (with $K=0$ and $\chi=1$). Moreover, these correction terms certainly do not only depend on the topology of the underlying surface, but rather on the geometry of its ends. This can be observed by considering a flat cylinder and a catenoid, two surfaces that are topologically equivalent (with Euler characteristic $\chi=0$) but satisfy $K\equiv 0$ and $K<0$, respectively. Similarly, also when the smoothness assumption is dropped, the Gauss-Bonnet formula requires correction terms. A simple example which illustrates this is the object obtained by cutting out a slice of a two-sphere between two geodesics from its north to its south pole and gluing the resulting bi-gon back together along these geodesics. The ``sphere with two conical points'' created this way is locally isometric to the round sphere away from the poles (and thus in particular satisfies $K\equiv 1$), but due to the fact that we lost area, the Gauss-Bonnet formula cannot hold. In fact, we see that the formula needs a correction term which depends on the area that was cut out, or equivalently on the conical angle at the two singularities. As it turns out, all correction terms can be expressed as isoperimetric ratios at the ends or isoperimetric deficits at the singular points. For only some of the most important results of the extensive literature on such two-dimensional formulas, we refer the reader to \cite{CV35,H57,F65,T91,CL95}.\\

A higher-dimensional analog of the Gauss-Bonnet formula has been discovered by Chern \cite{Ch43}. In dimension four, it can be expressed as
\begin{equation}\label{eq.CGB4d}
\chi(M) = \frac{1}{4\pi^2}\int_M \Big(\frac{1}{8}\abs{W_g}_g^2+Q_{g,4}\Big) dV_g,
\end{equation}
where $(M^4,g)$ is a smooth closed four-manifold, $W_g$ is its Weyl curvature, and
\begin{equation}\label{eq.Q4d}
Q_{g,4} := -\frac{1}{12}\big(\Lap_g \R_g -\R_g^2 +3\abs{\Rc_g}_g^2\big),
\end{equation}
is the Paneitz $Q$-curvature introduced by Branson and Branson-{\O}rsted \cite{B85,BO91,B93}. Here, $\R_g$ denotes the scalar and $\Rc_g$ the Ricci curvature of $(M^4,g)$. As in the two-dimensional case, also the four-dimensional formula \eqref{eq.CGB4d} requires correction terms if the smoothness or compactness assumptions are dropped. The most basic situation where this can be observed is for a conformal metric on $\RR^4\setminus\{0\}$ with one end (at infinity) and one singular point (at the origin). For such metrics, we proved the following result in \cite{BuzaNg1}. 

\begin{thm}[\cite{BuzaNg1}, Theorem 1.1]\label{thm.R4}
Let $g=e^{2w}\abs{dx}^2$ be a metric on $\RR^4\setminus\{0\}$ which is complete at infinity and has finite area over the origin. If $g$ has finite total $Q$-curvature, $\int_{\RR^4}\abs{Q_{g,4}}\, dV_g<\infty$, and non-negative scalar curvature at infinity and at the origin, then we have
\begin{equation}\label{eq.CGB-R4}
\chi(\RR^4)-\frac{1}{4\pi^2}\int_{\RR^4}Q_{g,4}\, dV_g = \nu-\mu,
\end{equation}
where $\nu := \lim_{r\to\infty} C_{3,4}(r)$, $\mu :=\lim_{r\to 0} C_{3,4}(r) - 1$, and 
\begin{equation*}
C_{3,4}(r) := \frac{\vol_g(\dB_r(0))^{4/3}}{4(2\pi^2)^{1/3}\,\vol_g(B_r(0))}
\end{equation*}
denotes the isoperimetric ratio, normalised to be $1$ on Euclidean space.
\end{thm}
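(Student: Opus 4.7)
The plan is to exploit the conformal flatness of $g$. Since $\abs{W_g}_g$ vanishes identically, the entire right-hand side of the Chern--Gauss--Bonnet integrand is contained in the $Q$-curvature term, and since $Q_{g_0}=0$ on flat $\RR^4$, the conformal transformation rule for $Q$-curvature reduces to the semilinear biharmonic equation
\[
\Lap^2 w = c_4\,Q_{g,4}\,e^{4w}\quad\text{on } \RR^4\setminus\{0\},
\]
for an appropriate dimensional constant $c_4$. Using the fundamental solution of $\Lap^2$ on $\RR^4$ (a constant multiple of $\log\abs{x}$), I would represent $w$ as a logarithmic Riesz-type potential plus a biharmonic correction,
\[
w(x) = a_4\int_{\RR^4}\log\tfrac{1}{\abs{x-y}}\,Q_{g,4}(y)\,e^{4w(y)}\,dy + \phi(x),
\]
where $\phi$ is biharmonic on $\RR^4\setminus\{0\}$; the integral converges thanks to the finite total $Q$-curvature hypothesis.

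The core of the argument is then to pin down the biharmonic remainder $\phi$. A general biharmonic function on $\RR^4\setminus\{0\}$ admits an expansion into spherical harmonics multiplied by powers of $\abs{x}$ together with $\log$-corrections, and a priori it can carry singular modes at the origin (such as $\abs{x}^{-2}$ and $\abs{x}^{-2}\log\abs{x}$) as well as polynomial growth at infinity. The hypotheses of the theorem should kill all of these: completeness at infinity and finite $g$-area over the origin control the growth and singular behaviour of $e^{4w}$ at the two ends, while non-negativity of the scalar curvature at both ends---which, via the conformal formula in dimension four, translates into the differential inequality $\Lap w+\abs{\nabla w}^2\le 0$ near $\infty$ and near $0$---provides the rigidity needed to rule out the remaining modes. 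The outcome should be
\[
\phi(x) = c_0\log\abs{x} + c_1,
\]
which, combined with an asymptotic expansion of the Riesz potential as $\abs{x}\to\infty$ and as $\abs{x}\to 0$, yields the clean logarithmic asymptotics
\[
w(x) = -\alpha\log\abs{x} + O(1)\ \text{as } \abs{x}\to\infty,\qquad w(x) = \beta\log\abs{x} + O(1)\ \text{as } \abs{x}\to 0,
\]
where $\alpha$ and $\beta$ split the total $Q$-curvature mass between the two ends and satisfy $\alpha+\beta = \tfrac{1}{4\pi^2}\int_{\RR^4}Q_{g,4}\,dV_g$.

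Once these asymptotics are in hand, the conclusion is a direct computation. Substituting $e^w\sim A\abs{x}^{-\alpha}$ at infinity, respectively $e^w\sim A'\abs{x}^{\beta}$ near the origin, into
\[
\vol_g(\dB_r(0)) = \int_{\dB_r(0)} e^{3w}\,dS,\qquad \vol_g(B_r(0)) = \int_{B_r(0)} e^{4w}\,dx,
\]
and plugging the results into the isoperimetric ratio $C_{3,4}(r)$, one finds $\lim_{r\to\infty}C_{3,4}(r) = 1-\alpha$ and $\lim_{r\to 0}C_{3,4}(r) = 1+\beta$, so that $\nu = 1-\alpha$ and $\mu = \beta$. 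Since $\chi(\RR^4)=1$, this yields
\[
\chi(\RR^4) - \frac{1}{4\pi^2}\int_{\RR^4}Q_{g,4}\,dV_g = 1 - (\alpha+\beta) = (1-\alpha) - \beta = \nu - \mu,
\]
which is the desired identity \eqref{eq.CGB-R4}.

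The main obstacle is the rigidity step for $\phi$, and in particular ruling out the non-trivial singular biharmonic modes at the origin---which, unlike the situation at infinity treated by Chang--Qing--Yang for complete conformal metrics on $\RR^{2m}$, must be controlled using only the local assumptions of finite $g$-area and non-negative scalar curvature near $0$. This is exactly the point at which the interplay between the conformal structure, the local positivity assumption, and the global integrability of $Q_{g,4}\,e^{4w}$ has to be exploited most carefully.
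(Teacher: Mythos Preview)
Your overall architecture matches the paper's: represent $w$ as a logarithmic potential of $Q_{g,4}e^{4w}$ plus a biharmonic remainder, show the remainder collapses to $c_0\log\abs{x}+c_1$ (the paper calls such metrics \emph{generalised normal}), and then compute the isoperimetric limits. The rigidity step you identify as the main obstacle is indeed one of the two hard parts, and the paper handles it essentially as you suggest, via symmetrisation about arbitrary centres and the scalar-curvature inequality (see Lemma~\ref{lemma.scalarx0} and Proposition~\ref{prop_gen_metric}).

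The genuine gap is in your last step, which you describe as ``a direct computation''. From the representation formula you get at best $w(x)=-\alpha\log\abs{x}+O(1)$; in fact the near-diagonal piece of the potential (the paper's $w_3$) need not even be pointwise bounded. An $O(1)$ remainder in $w$ is far too weak to force $C_{3,4}(r)$ to have a limit: the factor $e^{O(1)}$ in $e^{3w}$ can oscillate both angularly and in $r$, so $\vol_g(\partial B_r)$ and $\vol_g(B_r)$ can each fluctuate by bounded multiplicative factors that do not cancel in the ratio. The stronger assertion $e^w\sim A\abs{x}^{-\alpha}$ you actually substitute would require the remainder to converge to a constant, and this is simply false for a generic generalised normal metric. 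The paper closes this gap not with pointwise asymptotics but with an averaging lemma (Proposition~\ref{prop.1}): for every $k>0$,
\[
\dashint_{\partial B_r(0)} e^{kw}\,d\sigma = e^{k\bar w(r)}\,e^{o(1)}\qquad (r\to 0\text{ or }r\to\infty),
\]
whose proof needs a Finn-type level-set estimate on $w_3$. This reduces the isoperimetric ratios of $g$ to those of the rotationally symmetric metric $\bar g=e^{2\bar w}\abs{dx}^2$ (Corollary~\ref{cor.volumes}), and the radial case is handled separately by L'H\^{o}pital (Lemma~\ref{lemma.isoperimetric}). So what you call a direct computation is in fact a second substantial piece of analysis.

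One small technical point: your potential $\int_{\RR^4}\log\tfrac{1}{\abs{x-y}}\,Q_{g,4}e^{4w}\,dy$ need not converge, since $\log\tfrac{1}{\abs{x-y}}\sim -\log\abs{y}$ for large $\abs{y}$ and $Q_{g,4}e^{4w}$ is only in $L^1$. The paper uses the kernel $\log\tfrac{\abs{y}}{\abs{x-y}}$ for exactly this reason.
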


More generally, one can consider four-manifolds $(M^4,g)$ with finitely many conformally flat complete ends and finite-area singular points, that is
\begin{equation}\label{eq.M4split}
M=N\cup\Big(\bigcup_{i=1}^k E_i\Big)\cup\Big(\bigcup_{j=1}^{\ell} S_j\Big)
\end{equation}
where $(N,g)$ is a compact manifold with boundary $\partial N=\big(\bigcup_{i=1}^k \partial E_i\big)\cup\big(\bigcup_{j=1}^{\ell} \partial S_j\big)$, each $E_i$ is a conformally flat complete simple end satisfying
\begin{equation}\label{eq.4dends}
(E_i,g)=(\RR^4\setminus B,e^{2e_i}\abs{dx}^2)
\end{equation}
for some function $e_i(x)$, and each $S_j$ is a conformally flat region with finite area and with a point-singularity at some $p_j$, satisfying
\begin{equation}\label{eq.4dsings}
(S_j\setminus\{p_j\},g)=(B\setminus\{0\},e^{2s_j}\abs{dx}^2)
\end{equation}
for some function $s_j(x)$. Here, $B$ denotes the unit ball in $\RR^4$. Localising Theorem \ref{thm.R4} to such ends and singular regions (obtaining Chern-Gauss-Bonnet formulas with boundary terms), and gluing all the pieces together, we obtained the following more general theorem.

\begin{thm}[\cite{BuzaNg1}, Theorem 1.6]\label{thm.M4}
Let $(M^4,g)$ satisfy \eqref{eq.M4split}--\eqref{eq.4dsings} and assume that $g$ has finite total $Q$-curvature, $\int_{M}\abs{Q_{g,4}}\, dV_g<\infty$, and non-negative scalar curvature at every singular point and at infinity at each end. Then we have
\begin{equation}\label{eq.CGB-M4}
\chi(M)-\frac{1}{4\pi^2}\int_M \Big(\frac{1}{8}\abs{W_g}_g^2+Q_{g,4}\Big) dV_g = \sum_{i=1}^k\nu_i-\sum_{j=1}^{\ell}\mu_j,
\end{equation}
where in the coordinates of \eqref{eq.4dends} and \eqref{eq.4dsings}, we have
\begin{equation*}
\nu_i :=\lim_{r\to\infty}\, \frac{\big(\int_{\dB_r(0)}e^{3e_i(x)}d\sigma(x)\big)^{4/3}}{4(2\pi^2)^{1/3}\int_{B_r(0)\setminus B}e^{4e_i(x)}dx},\qquad i=1,\ldots,k,
\end{equation*}
and
\begin{equation*}
\mu_j :=\lim_{r\to0}\, \frac{\big(\int_{\dB_r(0)}e^{3s_j(x)}d\sigma(x)\big)^{4/3}}{4(2\pi^2)^{1/3}\int_{B_r(0)}e^{4s_j(x)}dx}-1,\qquad j=1,\ldots,\ell.
\end{equation*}
\end{thm}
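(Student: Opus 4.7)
The plan is to deduce Theorem \ref{thm.M4} from Theorem \ref{thm.R4} by a localisation and gluing argument: first, local versions of the Chern-Gauss-Bonnet formula are established on each conformally flat end $E_i$ and each singular region $S_j$, with the missing Euler characteristic contribution replaced by a boundary transgression integral; then these local formulas are glued to a classical Chern-Gauss-Bonnet-with-boundary identity on the compact core $N$. After summation, the transgression integrals along the interior interfaces $\partial E_i$ and $\partial S_j$ cancel in pairs, leaving only the asymptotic contributions $\nu_i$ and $\mu_j$ coming from the outer spheres.

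More precisely, truncating each end $E_i$ at the sphere $\dB_r(0)$ in the coordinates \eqref{eq.4dends}, and each singular region $S_j$ at $\dB_r(0)$ in the coordinates \eqref{eq.4dsings}, produces a smooth compact manifold-with-boundary $M_r$ to which the Allendoerfer-Weil Chern-Gauss-Bonnet formula applies:
\begin{equation*}
\chi(M_r) = \frac{1}{4\pi^2}\int_{M_r}\Big(\tfrac{1}{8}\abs{W_g}_g^2 + Q_{g,4}\Big)dV_g + \sum_{i=1}^{k}\int_{\dB_r(0)\subset E_i} T(g) + \sum_{j=1}^{\ell}\int_{\dB_r(0)\subset S_j} T(g),
\end{equation*}
where $T(g)$ denotes the standard transgression form on the boundary, with appropriate orientations. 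Since each $E_i$ and each $S_j$ is conformally flat, $W_g\equiv 0$ there, so the task reduces to analysing the limits of the two boundary integrals as $r\to\infty$ at each end and $r\to 0$ at each singularity. By the same computations which identify the $\nu$ and $\mu$ terms in the proof of Theorem \ref{thm.R4}, these limits should be expressible in terms of $\nu_i$ and $\mu_j$, together with a topological correction accounting for the difference between $\chi(M_r)$ and $\chi(M)$ obtained from Mayer-Vietoris applied to the decomposition \eqref{eq.M4split}, yielding \eqref{eq.CGB-M4} after collecting terms.

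The main obstacle will be establishing the \emph{individual} boundary asymptotics at each end and at each singularity, since Theorem \ref{thm.R4} only directly delivers the combined expression $\nu-\mu$. The scalar curvature non-negativity assumption at each end and each singular point is the key ingredient here: via the analysis of the Paneitz equation carried out in \cite{BuzaNg1}, it provides a Schoen-type asymptotic expansion of the conformal factor $e^{e_i}$ at infinity on $E_i$ and $e^{s_j}$ near the origin on $S_j$, which in turn allows the boundary transgression to be identified with the isoperimetric ratio defining $\nu_i$ or $\mu_j$. A secondary technical task is the careful bookkeeping of orientations and signs, ensuring that the transgression integrals along the interior boundaries $\partial E_i$ and $\partial S_j$ cancel precisely across the decomposition, and that the remaining contributions combine, together with the Euler-characteristic correction, into the stated formula.
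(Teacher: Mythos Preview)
This theorem is not proved in the present paper; it is quoted from the authors' earlier work \cite{BuzaNg1}, and the paper only summarises the method in one sentence: ``Localising Theorem \ref{thm.R4} to such ends and singular regions (obtaining Chern-Gauss-Bonnet formulas with boundary terms), and gluing all the pieces together, we obtained the following more general theorem.'' Your proposal is consistent with that description and is essentially the strategy of \cite{BuzaNg1}.

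One remark on the framing: rather than applying the Allendoerfer--Weil formula to the whole truncated manifold $M_r$ and then trying to disentangle the limits, the approach of \cite{BuzaNg1} (as the summary indicates) first localises Theorem \ref{thm.R4} to each conformally flat piece $E_i$ and $S_j$ \emph{separately}, obtaining on each a Chern--Gauss--Bonnet formula with a boundary $T$-curvature term on $\partial B$. This sidesteps what you call the ``main obstacle'' of separating $\nu$ from $\mu$: on a single end $E_i$ there is only the asymptotic at infinity to analyse, and on a single $S_j$ only the asymptotic at the origin. The compact core $N$ is handled by the classical Chern--Gauss--Bonnet formula with boundary, and the $T$-curvature integrals along the shared interfaces $\partial E_i$, $\partial S_j$ cancel upon gluing.

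It is worth noting that the present paper explicitly abandons this boundary-term strategy in higher dimensions, remarking that ``no good notion of boundary $T$-curvature associated to $Q_{g,n}$ is known in dimensions $n>4$''; Theorem \ref{thm.Mn} is instead proved by a partition-of-unity argument on $\mathbb{S}^n$ (Section \ref{sect.gluing}) that avoids boundary integrals altogether.
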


The Chern-Gauss-Bonnet formulas in the above two theorems, generalising in particular the formulas of Chang-Qing-Yang \cite{CQY1,CQY2} for smooth but non-compact four-manifolds, are the first such formulas in a dimension higher than two which allow the underlying manifold to have isolated branch points or conical singularities. It is natural to ask whether Theorem \ref{thm.R4} and \ref{thm.M4} can be generalised to higher even dimensions $n = 2m \geq 4$ using the $n$-dimensional $Q$-curvature. In the present article, we give an affirmative answer in the case of Theorem \ref{thm.R4} and prove an analog of Theorem \ref{thm.M4} for a class of conformally flat manifolds.\\

Restricting to the conformally flat case has two main advantages. On the one hand, the Paneitz operator (see \cite{P83}) and its corresponding $Q$-curvature are not unique for general manifolds but for a conformally flat metric $g=e^{2w}\abs{dx}^2$, the $n$-dimensional $Q$-curvature is uniquely determined. We define the $n$-dimensional $Q$-curvature by the formula
\begin{equation}\label{eq.Qn}
2Q_{g,n} := e^{-nw(x)} (-\Lap)^{n/2} w(x).
\end{equation} 
As we only work in even dimensions, this is indeed an $n$-th order partial differential equation (while it would yield a pseudo-differential equation involving a fractional Laplacian in odd dimensions). Let us remark here that an explicit formula for $Q_{g,n}$ in terms of the Riemann curvature tensor and its covariant derivatives, similar to \eqref{eq.Q4d} in the four-dimensional case, is difficult to obtain in higher dimensions and is currently unknown for dimensions higher than $8$. On the other hand, a second advantage of restricting to conformally flat manifolds is that in this case the $Q$-curvature is a multiple of the Pfaffian modulo a divergence term. Thus the Chern-Gauss-Bonnet theorem can be written as
\begin{equation}\label{eq.CGBn}
\chi(M^n) = \frac{1}{\gamma_n} \int_{M^n} Q_{g,n}\, dV_g.
\end{equation}
for a smooth and closed (locally) conformally flat $n$-manifold $(M^n,g)$ with 
\begin{equation}\label{eq.gamman}
\gamma_n = 2^{n-2} (\tfrac{n-2}{2})! \, \pi^{n/2},
\end{equation}
We first prove the following generalisation of our four-dimensional result from Theorem \ref{thm.R4}.

\begin{thm}\label{thm.Rn}
Let $n\geq 4$ be an even integer and let $g=e^{2w}\abs{dx}^2$ be a metric on $\RR^n\setminus\{0\}$ which is complete at infinity and has finite area over the origin. If $g$ has finite total $Q$-curvature, $\int_{\RR^n}\abs{Q_{g,n}}\, dV_g<\infty$, and non-negative scalar curvature at infinity and at the origin, then we have
\begin{equation}\label{eq.CGB-Rn}
\chi(\RR^n)-\frac{1}{\gamma_n}\int_{\RR^n}Q_{g,n}\, dV_g = \nu-\mu,
\end{equation}
where 
\begin{equation*}
\nu := \lim_{r\to\infty} \frac{\vol_g(\dB_r(0))^{n/(n-1)}}{n\, \sigma_n^{1/(n-1)}\,\vol_g(B_r(0))} \qquad \mu := \lim_{r\to 0} \frac{\vol_g(\dB_r(0))^{n/(n-1)}}{n\, \sigma_n^{1/(n-1)}\,\vol_g(B_r(0))} - 1.
\end{equation*}
Here, $\sigma_n = \abs{\Sph^{n-1}} = 2\pi^{n/2} / (\tfrac{n-2}{2})!$ denotes the surface area of the unit $(n-1)$-sphere.
\end{thm}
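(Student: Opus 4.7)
The plan is to follow the strategy of the four-dimensional proof of Theorem \ref{thm.R4} in \cite{BuzaNg1}, now adapted to the $n$-th order conformally invariant equation
\begin{equation*}
(-\Lap)^{n/2} w = 2 Q_{g,n}\, e^{nw}\quad \text{on } \RR^n\setminus\{0\},
\end{equation*}
equivalent to \eqref{eq.Qn}. In the first stage, I would use the fact that $\log\abs{x}$ is, up to a normalising constant depending on $\gamma_n$, the fundamental solution of $(-\Lap)^{n/2}$ on $\RR^n$ to introduce the integral representative
\begin{equation*}
v(x) = C_n\int_{\RR^n}\log\frac{\abs{y}}{\abs{x-y}}\, 2 Q_{g,n}(y)\, e^{nw(y)}\, dy,
\end{equation*}
with $C_n$ chosen so that $(-\Lap)^{n/2}v = 2 Q_{g,n}\,e^{nw}$ distributionally on $\RR^n$. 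The subtracted term $\log\abs{y}$ in the kernel keeps $v$ bounded near the origin, while at infinity Fubini together with the finite-total-$Q$-curvature assumption yields the spherical averages
\begin{equation*}
\dashint_{\dB_r(0)} v\, d\sigma = -\alpha\log r + O(1) \text{ as } r\to\infty, \qquad \dashint_{\dB_r(0)} v\, d\sigma = O(1) \text{ as } r\to 0,
\end{equation*}
where $\alpha := \gamma_n^{-1}\int_{\RR^n}Q_{g,n}\,dV_g$; differentiation under the integral sign further gives $r\abs{\nabla v(x)}\to\alpha$ as $r=\abs{x}\to\infty$ and $r\abs{\nabla v(x)}\to 0$ as $r\to 0$.

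The heart of the argument is then a polyharmonic rigidity statement for the difference $h := w - v$, which satisfies $(-\Lap)^{n/2} h = 0$ on $\RR^n\setminus\{0\}$. Averaging over spheres renders $h$ radial, and the order-$n$ Euler-type ODE for the spherical average has fundamental system $\{1, r^2,\ldots,r^{n-2},r^{-2},\ldots,r^{2-n},\log r\}$. I would argue that only the constant and the $\log r$ term are compatible with our assumptions, so $h(x) = c + \beta\log\abs{x}$ for some $c,\beta\in\RR$. The super-logarithmic polynomial terms $r^{2k}$ with $k\geq 1$ must be excluded at infinity because, via the conformal change formula for scalar curvature, such a contribution to $w$ would force $\R_g$ to become negative at infinity, contradicting the non-negativity hypothesis there; analogously the singular terms $r^{-2k}$ are ruled out at the origin by combining the finite-area hypothesis with the non-negativity of $\R_g$ near $0$. \emph{This is the principal obstacle.} In dimension four only a single singular monomial needs to be eliminated, but for $n\geq 6$ a whole tower of admissible radial contributions appears both at $\infty$ and at $0$, while the scalar-curvature positivity provides direct control only on $-\Lap w$. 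I expect this step to require iterating Harnack/sub-mean-value arguments through the intermediate quantities $(-\Lap)^k w$ for $k=1,\ldots,n/2-1$, in the spirit of Chang--Qing--Yang \cite{CQY1,CQY2}, possibly complemented by Pohozaev-type integral identities that isolate the contributions from the two singularities.

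Granted the classification, the spherical averages of $w$ take the sharp form $\dashint_{\dB_r} w = (\beta-\alpha)\log r + O(1)$ at infinity and $\beta\log r + O(1)$ at the origin, and the gradient bounds inherited from $v$ upgrade these to the pointwise asymptotics $e^{w(x)}\asymp\abs{x}^{\beta-\alpha}$ for large $\abs{x}$ and $e^{w(x)}\asymp\abs{x}^\beta$ near $0$, with completeness at infinity forcing $\alpha-\beta\leq 1$ and finite area forcing $\beta>-1$. A direct computation of $\vol_g(\dB_r)$ and $\vol_g(B_r)$, exploiting the cancellation $\sigma_n^{n/(n-1)}/(\sigma_n\cdot\sigma_n^{1/(n-1)})=1$, then gives $\mu=\beta$ and $\nu=1-\alpha+\beta$, so that
\begin{equation*}
\nu-\mu = 1-\alpha = \chi(\RR^n) - \frac{1}{\gamma_n}\int_{\RR^n}Q_{g,n}\,dV_g,
\end{equation*}
which is precisely \eqref{eq.CGB-Rn}.
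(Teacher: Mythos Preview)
Your overall strategy matches the paper's: build the logarithmic potential $v$, classify the polyharmonic remainder $h=w-v$, and read off the isoperimetric limits. However, two of your steps contain genuine gaps, while the step you flag as the principal obstacle is in fact easier than you anticipate.

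\textbf{The polynomial elimination is simpler than you think.} No iteration through $(-\Lap)^k w$ is needed. The paper proves directly (Proposition~\ref{prop.intest}) that the kernel averages $r^2 J_n(r,s)$ and $K_n(r,s)$ are uniformly bounded, which yields $|x|^2\,|\Lap \bar v|\leq C$ and $|x|\,|\nabla \bar v|\leq C$. Since the scalar-curvature formula \eqref{eq.Rconf} involves only $\Lap w$ and $|\nabla w|^2$, these bounds on $\bar v$ suffice: any surviving $r^{\pm 2k}$ term in $\bar h$ forces $|x|^2\R_g e^{2w}\to -\infty$ at one of the two ends, contradicting $\R_g\geq 0$ there (Lemma~\ref{lemma.wisfa}).

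\textbf{Gap 1: from $\bar h$ to $h$.} You pass from ``the spherical average $\bar h$ equals $c+\beta\log r$'' to ``$h(x)=c+\beta\log|x|$ pointwise'' without justification. Knowing $(-\Lap)^{n/2}h=0$ on $\RR^n\setminus\{0\}$ together with the radial structure of $\bar h$ does \emph{not} determine $h$. The paper (Section~\ref{sect.general}) closes this by symmetrising about \emph{every} centre $x_0$, showing each such average is constant; this forces $h-\alpha\log|x|$ to be harmonic, and then a growth estimate plus Liouville gives constancy. This is the ``much more involved'' singularity-removal step alluded to in the introduction.

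\textbf{Gap 2: the pointwise asymptotics fail.} The claim $e^{w(x)}\asymp|x|^\beta$ does not follow from gradient bounds on $v$: the pointwise quantity $|x|\,|\nabla v(x)|$ is \emph{not} controlled, because the contribution to $v(x)$ from the annulus $\tfrac12|x|\leq|y|\leq\tfrac32|x|$ need not be $o(1)$ pointwise (only its spherical average is). The paper replaces your pointwise computation by the weaker but sufficient statement $\dashint_{\dB_r}e^{kw}\,d\sigma = e^{k\bar w(r)}(1+o(1))$ (Proposition~\ref{prop.1}), proved via a Finn-type level-set estimate on the annular piece $w_3$; the isoperimetric limits are then computed for the radial averaged metric $\bar g$ via L'H\^opital (Lemma~\ref{lemma.isoperimetric}) and transferred back to $g$ through Corollary~\ref{cor.volumes}.
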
 

\begin{rem}\label{counterexample}
By non-negative scalar curvature at infinity we mean
\begin{equation}\label{eq.nonnegRcorrect}
\inf_{\RR^n\setminus B_{r_1}(0)} \R_g(x)\geq 0
\end{equation}
for some $0<{r_1}<\infty$. This assumption cannot be replaced by the weaker assumption 
\begin{equation}\label{eq.nonnegRfalse}
\liminf_{\abs{x}\to\infty} \R_g(x)\geq 0
\end{equation}
as can be seen by considering the metric $g=e^{2\abs{x}^2}\abs{dx}^2$, which is complete at infinity, satisfies $Q_{g,n}\equiv 0$ and 
\begin{equation*}
\R_g(x)=-n(n-1)e^{-2\abs{x}^2}-(n-1)(n-2)\abs{x}^2e^{-2\abs{x}^2}\to 0 \quad (\text{as }\abs{x}\to\infty),
\end{equation*}
but has asymptotic isoperimetric ratio $\nu = +\infty$ (and $\mu=0$, since the metric is smooth at the origin). Similarly, we also require $\inf_{B_{r_2}(0)} \R_g(x)\geq 0$ for some $0<{r_2}<\infty$.
\end{rem}

Using a partition of unity argument, we then obtain the following more general theorem.
\begin{thm}\label{thm.Mn}
Let $\Lambda = \{p_1, \ldots, p _k, q _1 , \ldots q_\ell \} \subset \mathbb{S}^n$ be a finite (possibly empty) set of points and let $(\Omega = \mathbb{S}^n \setminus \{p_1, \ldots, p _k\},g)$ be such that each $p_ i$ is a complete end of finite total $Q$-curvature and non-negative scalar curvature and each $q _ j$ is a finite area singular point of finite total $Q$-curvature and non-negative scalar curvature. Then 
\begin{align*}
\chi(\Omega) - \frac{1}{\gamma_n}\int_{\Omega}Q_{g,n} \, dV_g = \sum _{i=1}^ k \nu_i - \sum _{ j=1} ^\ell \mu_j ,
\end{align*}  
where 
\begin{equation*}
\nu_i := \lim_{r_{p_i}\to 0} \frac{\vol_g(\dB_{r_{p_1}}(p_i))^{n/(n-1)}}{n\, \sigma_n^{1/(n-1)}\,\vol_g(B_R(p_i) \setminus B_{r_{p_1}}(p_i))}, \quad \mu_j := \lim_{r_{q_j }\to 0} \frac{\vol_g(\dB_{r_{q_j }}(q_j))^{n/(n-1)}}{n\, \sigma_n^{1/(n-1)}\,\vol_g(B_{r_{q_j }}(q_j))} - 1
\end{equation*}
with $\R_{p}(x)= \dist(x , p)$, and $R$ is chosen small enough such that the balls $\{B_R(p)\}_{p\in\Lambda}$ are pairwise disjoint. 
\end{thm}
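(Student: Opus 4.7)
The plan is to reduce Theorem~\ref{thm.Mn} to Theorem~\ref{thm.Rn} by decomposing $\Omega$ into a smooth compact core and punctured-ball neighbourhoods of each end and each singular point, applying a localized Chern-Gauss-Bonnet identity with boundary on every piece, and then gluing everything back together so that the contributions along shared interfaces cancel while the asymptotic contributions survive as $\nu_i$ and $-\mu_j$.

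First I would fix $R>0$ small enough that the balls $\{B_R(p)\}_{p\in\Lambda}$ are pairwise disjoint and write
\[
\Omega = N \cup \Big(\bigcup_{i=1}^{k} E_i\Big) \cup \Big(\bigcup_{j=1}^{\ell} S_j\Big),
\]
where $N := \Omega\setminus\bigcup_{p\in\Lambda}B_R(p)$ is a smooth compact manifold with boundary, each $E_i := B_R(p_i)\setminus\{p_i\}$ is a complete conformally flat end (in stereographic coordinates centred at $p_i$, we have $(E_i, g) = (\RR^n\setminus B', e^{2e_i}\abs{dx}^2)$), and each $S_j := B_R(q_j)$ is a conformally flat region with a single finite-area singularity at $q_j$ (in local conformally flat coordinates, $(S_j\setminus\{q_j\}, g) = (B'\setminus\{0\}, e^{2s_j}\abs{dx}^2)$).

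The central step is to localize the proof of Theorem~\ref{thm.Rn}: running only one of the two limiting arguments used there (the one at infinity for an end, or the one at the origin for a singular point) should yield Chern-Gauss-Bonnet formulas with boundary of the form
\begin{align*}
\chi(E_i) - \frac{1}{\gamma_n}\int_{E_i} Q_{g,n}\, dV_g &= \nu_i + T_i,\\
\chi(S_j) - \frac{1}{\gamma_n}\int_{S_j} Q_{g,n}\, dV_g &= T'_j - \mu_j,
\end{align*}
where $T_i$ and $T'_j$ are boundary integrals over $\dB_R(p_i)$ and $\dB_R(q_j)$, oriented as parts of $\partial E_i$ and $\partial S_j$ respectively, depending only on the conformal factor and its derivatives at the finite radius $R$ (essentially the normalized isoperimetric ratio in the relevant chart). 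The classical Chern-Gauss-Bonnet with boundary applied to the smooth compact piece $N$ (an extension of \eqref{eq.CGBn} available in the conformally flat setting) produces
\[
\chi(N) - \frac{1}{\gamma_n}\int_N Q_{g,n}\, dV_g = -\sum_{i=1}^{k} T_i - \sum_{j=1}^{\ell} T'_j,
\]
with the opposite signs reflecting the fact that each interface inherits opposite orientations as a boundary of $N$ versus as a boundary of $E_i$ or $S_j$.

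Adding the three displayed identities makes the boundary contributions cancel in pairs. A Mayer-Vietoris computation using $\chi(E_i) = \chi(\Sph^{n-1}) = 0$ (since $n-1$ is odd), $\chi(S_j) = 1$, and the fact that the annular overlaps are topologically $\Sph^{n-1}\times I$ yields $\chi(\Omega) = \chi(N) + \ell$, so the resulting identity is precisely the claimed formula. I expect the main obstacle to be the precise justification of the localized identities: one has to verify that the boundary integrand produced by truncating the proof of Theorem~\ref{thm.Rn} at a finite sphere coincides, up to orientation, with the boundary integrand appearing in the Chern-Gauss-Bonnet-with-boundary formula on $N$. This ultimately reduces to showing that, in the conformally flat case, the difference between $\tfrac{1}{\gamma_n}Q_{g,n}\, dV_g$ and the Pfaffian density is an exact divergence whose primitive can be written intrinsically on each side of the interface, so that Stokes' theorem produces matching boundary integrals.
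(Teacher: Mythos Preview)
Your approach is the natural generalisation of the four-dimensional argument in \cite{BuzaNg1}, but the paper explicitly flags that it cannot be carried out in dimensions $n>4$: as stated at the end of Section~1, ``currently no good notion of boundary $T$-curvature associated to $Q_{g,n}$ is known in dimensions $n>4$, [so] we cannot localise Theorem~\ref{thm.Rn} as we did in \cite{BuzaNg1}.'' The obstacle you identify yourself --- matching the boundary integrand produced by truncating the proof of Theorem~\ref{thm.Rn} with an intrinsic Chern--Gauss--Bonnet-with-boundary term on $N$ --- is exactly what fails. You would need a higher-order analogue of the Chang--Qing $T$-curvature so that $\chi(N)-\tfrac{1}{\gamma_n}\int_N Q_{g,n}\,dV_g$ equals a boundary integral that is \emph{intrinsic} to $\partial N$; without this, the boundary terms you write as $T_i$, $T'_j$ have no reason to be the same object on the two sides of each interface.

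The paper circumvents this by a different mechanism: instead of cutting the manifold, it cuts the \emph{conformal factor}. After a stereographic projection sending an auxiliary point $N\in\Sph^n\setminus\Lambda$ to infinity, one has $g=e^{2w}\abs{dx}^2$ on $\RR^n$. A partition of unity $\{\varphi_i\}$ subordinate to neighbourhoods of the points of $\Lambda$ gives $w=\sum_i w_i$ with $w_i=\varphi_i w$, and the crucial point is that $2Q_{g,n}e^{nw}=(-\Lap)^{n/2}w$ is \emph{linear} in $w$, so
\[
\int_{\RR^n} Q_{g,n}\,dV_g = \sum_i \int_{\RR^n} Q_{g_i,n}\,dV_{g_i},\qquad g_i=e^{2w_i}\abs{dx}^2.
\]
Each $g_i$ is a globally defined metric on $\RR^n$ (or $\RR^n\setminus\{p_i\}$) which is exactly Euclidean outside a compact set, so Theorem~\ref{thm.Rn} and Lemma~\ref{lemma.2ends} apply directly with no boundary terms at all; the ``interface'' cancellation you are hoping for is replaced by the algebraic identity $\sum_i w_i=w$. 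This is what buys the paper its proof without any boundary curvature theory, at the price of restricting to globally conformally flat $\Omega\subset\Sph^n$ rather than the more general manifolds of Theorem~\ref{thm.M4}.
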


To put our results into context, let us compare them with what was previously known. In \cite{F05}, Fang proved the Chern-Gauss-Bonnet inequality 
\begin{equation}\label{eq.Fang}
\chi(M^n)-\frac{1}{\gamma_n}\int_{M^n}Q_{g,n}\, dV_g \geq 0
\end{equation}
for \emph{smooth} locally conformally flat $n$-manifolds with finitely many ends where the scalar curvature is non-negative. Our Theorems \ref{thm.Rn} and \ref{thm.Mn} do not only generalise this result to manifolds with singularities, but we also get an explicit formula for the error terms in this inequality. Note that for singular manifolds the inequality \eqref{eq.Fang} might no longer hold if the conical angle (and hence the isoperimetric ratio) is larger than the Euclidean one. Later, Ndiaye and Xiao \cite{NX10} obtained a result similar to Theorem \ref{thm.Rn}, but again only for \emph{smooth} metrics. They did not consider the case of manifolds with several ends but only studied conformal metrics on $\RR^n$. Let us remark that their work also contains some small errors which we correct here, in particular in their main theorem they only assume \eqref{eq.nonnegRfalse} and not \eqref{eq.nonnegRcorrect}, which in view of the counterexample from the above remark is not a sufficiently strong assumption. Nevertheless, we clearly profited from their results and, in fact, some of the ideas in the present article are in parts inspired by \cite{F05,NX10} -- combined with the approach we developed in \cite{BuzaNg1} in order to deal with isolated singularities. Let us also mention here that various normalisations of the $Q$-curvature exist in the literature and that, in particular, \cite{F05,NX10} define $Q_{g,n}$ without the factor $2$ in \eqref{eq.Qn}. We prefer to put this factor to make the results consistent with formula \eqref{eq.Q4d} and with our earlier work \cite{BuzaNg1} in dimension four.\\

In the \emph{singular} case, the problem has previously only been studied for manifolds with edge-cone singularities and $V$-manifolds, see e.g.~\cite{LS98,AL13,S57}, but these results are technically very different and none of them allows for isolated singular points.\\

Let us now describe how this article is organised and how the arguments differ from our previous four-dimensional results in \cite{BuzaNg1}. First, in Section \ref{sect.est}, we collect some integral estimates which are needed for the arguments in the following sections. Then, in the Sections \ref{sect.symm}--\ref{sect.general}, we prove Theorems \ref{thm.Rn} in three steps as follows. In Section \ref{sect.symm}, we prove the result in the special case where $w=w(r)$ is a radial function on $\RR^n\setminus\{0\}$ and \eqref{eq.Qn} reduces to an ODE. To make it easier to deal with all dimensions at once, we do not solve explicitly for the non-linearity in this ODE as we did in \cite{BuzaNg1} but rather prove asymptotic estimates for an abstract kernel (see Lemma \ref{lemma.fa}), employing the integral estimates from Section \ref{sect.est}. In Section \ref{sect.normal}, we introduce an $n$-dimensional version of our notion of generalised normal metrics from \cite{BuzaNg1} and prove Theorem \ref{thm.Rn} for this class of metrics. Then, in Section \ref{sect.general}, we show that every metric $g=e^{2w}\abs{dx}^2$ on $\RR^n\setminus\{0\}$ satisfying the assumptions of Theorem \ref{thm.Rn} is actually such a generalised normal metric. The necessary singularity removal argument in this section is much more involved than our short argument in dimension $n=4$ from \cite{BuzaNg1}, as we cannot directly apply B\^{o}cher's Theorem in higher dimensions. Finally, in Section \ref{sect.gluing}, we prove Theorem \ref{thm.Mn}.  As currently no good notion of boundary $T$-curvature associated to $Q_{g,n}$ is known in dimensions $n>4$, we cannot localise Theorem \ref{thm.Rn} as we did in \cite{BuzaNg1}. However, in the special case of domains $\Omega$ as in Theorem \ref{thm.Mn}, we can deduce the desired result from an easy partition of unity argument.


\section{Some Integral Estimates}\label{sect.est}

Let $n\geq 4$ be an even integer. Recall (see e.g.~\cite[Thm.~6.20]{LL01}) that $G_y(x) = \frac{1}{\abs{x-y}^{n-2}}$ is a multiple of the fundamental solution of the Laplacian on $\RR^n$. Indeed, in the sense of distributions, we have
\begin{equation}\label{eq.fundLap}
(-\Lap_x)G_y(x) = (n-2)\sigma_n \cdot \delta_y(x),
\end{equation}
where $\sigma_n = \abs{\Sph^{n-1}}$. This has two immediate consequences that are useful for us. First, it shows that in the sense of distributions
\begin{equation}\label{eq.fundLapn2}
(-\Lap_x)^{n/2} \log \frac{1}{\abs{x-y}} = 2\gamma_n \cdot \delta_y(x),
\end{equation}
where $\gamma_n$ is given in \eqref{eq.gamman}. Second, we see that if $u$ is a radial function solving $(-\Lap)^k u=0$ on $\RR^n\setminus \{0\}$, where $k$ is an integer satisfying $1\leq k\leq \frac{n}{2}$, then $u$ is given by
\begin{equation}\label{eq.soluformula}
\begin{aligned}
u(x) &= c_1 + c_2\frac{1}{\abs{x}^{n-2}} + c_3\abs{x}^2 + c_4\frac{1}{\abs{x}^{n-4}} + \ldots\\
&\quad + c_{n-3}\abs{x}^{n-4} + c_{n-2}\frac{1}{\abs{x}^2} + c_{n-1}\abs{x}^{n-2} + c_n\log\abs{x},
\end{aligned}
\end{equation}
for some constants $c_i\in\RR$, where $c_\ell=0$ for all $\ell>2k$. Both of these conclusions follow form \eqref{eq.fundLap} with short calculations, noting that in spherical coordinates $x=r \sigma$, where $r>0$ and $\sigma\in\Sph^{n-1}$, we have
\begin{equation}\label{eq.Lapspherical}
\Lap = \frac{d^2}{dr^2} + \Big(\frac{n-1}{r}\Big) \frac{d}{dr} + \frac{\Lap_{\Sph^{n-1}}}{r^2}.
\end{equation}

The goal of this section is to derive some integral estimates which will be useful in the arguments that follow. We denote by
\begin{equation*}
\dashint_{\dB_r(0)} f(x) \, d\sigma(x) := \frac{1}{\abs{\dB_r(0)}}\int_{\dB_r(0)} f(x) \, d\sigma(x)
\end{equation*}
the averaged integral over an $(n-1)$-sphere of radius $r$ in Euclidean $\RR^n$ and consider the following four integrals:
\begin{align}
I_n(r,s) &:= \dashint_{\dB_r(0)} \frac{1}{\abs{x-y}^{n-2}} \, d\sigma(x),\label{eq.defI}\\
J_n(r,s) &:= \dashint_{\dB_r(0)} \frac{1}{\abs{x-y}^{2}} \, d\sigma(x),\label{eq.defJ}\\
K_n(r,s) &:= \dashint_{\dB_r(0)} \frac{\abs{\abs{x}^2-\abs{y}^2}}{\abs{x-y}^{2}} \, d\sigma(x),\label{eq.defK}\\
L_n(r,s) &:= \dashint_{\dB_r(0)} \log \frac{\abs{y}}{\abs{x -y}} \, d\sigma(x),\label{eq.defL}
\end{align}
where $r = \abs{x}$, $s=\abs{y}$.

\begin{prop}\label{prop.intest}
We have the following estimates for the integrals from \eqref{eq.defI}--\eqref{eq.defL}.
\begin{itemize}
\item[i)] For any $r,s > 0$ and even integer $n\geq 4$, $I_n(r,s)$ evaluates to
\begin{equation}\label{eq.estI}
I_n(r,s) =
\begin{cases}
\frac{1}{r^{n-2}}, & \text{ if $s\leq r$},\\ 
\frac{1}{s^{n-2}}, & \text{ if $s> r $}.
\end{cases}
\end{equation}
\item[ii)] There exists $C>0$ such that for any $r,s > 0$ and even integer $n\geq 4$, we have
\begin{equation}\label{eq.prelimestJ}
\begin{aligned}
\abs{r^2 J_n(r,s)-1} &\leq C \frac{s^2}{r^2}, \quad  \text{ if $s \leq r$},\\
J_n(r,s) &\leq C\frac{1}{s^2}, \quad \text{ if $s>r$}.
\end{aligned}
\end{equation}
In particular, in both cases, we obtain 
\begin{equation}\label{eq.estJ}
r^2 J_n(r,s) \leq C.
\end{equation}
\item[iii)] There exists $C>0$ such that for any $r,s > 0$ and even integer $n\geq 4$, we have
\begin{equation}\label{eq.estK}
K_n(r,s)\leq C.
\end{equation}
\item[iv)] There exists $C>0$ such that for any $r,s > 0$ satisfying $\frac{1}{2}r\leq s\leq \frac{3}{2}r$ and any even integer $n\geq 4$, we have
\begin{equation}\label{eq.estL}
\abs{L_n(r,s)} \leq C.
\end{equation}
\end{itemize}
\end{prop}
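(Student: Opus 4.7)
Part (i) I would prove by harmonicity and rotational symmetry. For $s > r$ the integrand $x \mapsto \abs{x-y}^{-(n-2)}$ is harmonic in $x$ on $B_r(0)$, so the mean value property immediately yields $I_n(r,s)$ equals its value at the center $x = 0$, namely $s^{-(n-2)}$. For $s < r$ this argument fails in $x$, but rotational symmetry forces $I_n(r, \cdot)$ to be radial in $y$; meanwhile, for each $x \in \dB_r(0)$ the map $y \mapsto \abs{x-y}^{-(n-2)}$ is harmonic on $B_r(0)$, so averaging in $x$ gives a radial harmonic function of $y$ on $B_r(0)$. Such a function is constant, with value $I_n(r,0) = r^{-(n-2)}$.

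For parts (ii)--(iv), my plan is to reduce every averaged integral to a one-variable function of $u := s/r$ (or $u := r/s$) by passing to spherical coordinates on $\dB_r(0)$ with polar angle $\theta$ measured from $y$. For part (ii), this gives
\begin{equation*}
r^2 J_n(r,s) = F_n(s/r), \qquad F_n(u) := \tfrac{\abs{\Sph^{n-2}}}{\abs{\Sph^{n-1}}} \int_0^\pi \frac{\sin^{n-2}\theta}{1 + u^2 - 2u\cos\theta}\, d\theta.
\end{equation*}
Expanding the integrand in a geometric series around $u=0$, the $O(u)$ contribution vanishes because $\int_0^\pi \cos\theta \,\sin^{n-2}\theta\, d\theta = 0$ (substitute $\theta \mapsto \pi - \theta$), giving $F_n(u) = 1 + O(u^2)$ near $u = 0$. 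The key observation is that the apparent singularity at $u = 1$ is integrable precisely because $n \geq 4$: near $\theta = 0$, $\sin^{n-2}\theta/(1 - \cos\theta) \sim \theta^{n-4}$. Hence $F_n$ is continuous on $[0,1]$, and combining the Taylor bound (valid near $0$) with uniform boundedness on $[\tfrac{1}{2}, 1]$ gives $\abs{F_n(u) - 1} \leq Cu^2$ on all of $[0,1]$. For $s > r$ the symmetric identity $s^2 J_n(r,s) = F_n(r/s)$ and boundedness of $F_n$ on $[0,1]$ yield $J_n(r,s) \leq C/s^2$. Part (iii) is then immediate: $\abs{x}^2 \equiv r^2$ on $\dB_r(0)$ gives $K_n(r,s) = \abs{r^2 - s^2}\, J_n(r,s)$, which is bounded by $C$ in both regimes by the estimates in (ii).

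For part (iv), the analogous spherical coordinate computation yields
\begin{equation*}
L_n(r,s) = \log(s/r) - \tfrac{1}{2} H_n(s/r), \qquad H_n(u) := \tfrac{\abs{\Sph^{n-2}}}{\abs{\Sph^{n-1}}} \int_0^\pi \log(1 + u^2 - 2u\cos\theta)\, \sin^{n-2}\theta\, d\theta.
\end{equation*}
The integrand of $H_n$ has only a logarithmic singularity at $(u, \theta) = (1, 0)$, easily integrable against $\sin^{n-2}\theta \sim \theta^{n-2}$, so $H_n$ is continuous on $[0, \infty)$. On the compact range $u \in [\tfrac{1}{2}, \tfrac{3}{2}]$ both $\abs{\log u}$ and $\abs{H_n(u)}$ are therefore bounded, giving the estimate. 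The main technical hurdle throughout is verifying continuity of $F_n$ and $H_n$ across the diagonal $u = 1$, which is exactly where the hypothesis $n \geq 4$ is used; once that is in hand, every other step is a one-variable boundedness argument.
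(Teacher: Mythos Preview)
Your proposal is correct and takes a route genuinely different from the paper's.

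For part (i), the paper differentiates the spherical average in $r$, applies the divergence theorem to pick up a Dirac mass (or not), and integrates back; your mean-value argument (in $x$ for $s>r$, in $y$ for $s<r$) is a cleaner way to exploit the same harmonicity.

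The real divergence is in part (ii). The paper uses the polyharmonic structure: applying $\Lap_y^{n/2-2}$ to $J_n$ lands on a multiple of $I_n$, so (via \eqref{eq.soluformula}) $J_n(r,\cdot)$ is polyharmonic in $s$, and scaling forces $r^2 J_n(r,s)=1+p(s^2/r^2)$ for an explicit polynomial $p$ of degree $\tfrac{n}{2}-2$ with no constant term. For $s>r$ it uses H\"older with exponent $\tfrac{n}{2}-1$ to reduce to $I_n$. Your approach bypasses all of this by writing $r^2 J_n(r,s)=F_n(s/r)$ and analysing $F_n$ directly: vanishing of the linear Taylor term at $0$, and integrability of the $\theta^{n-4}$ singularity at $u=1$ (precisely where $n\geq 4$ enters). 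The symmetry $s^2 J_n(r,s)=F_n(r/s)$ then handles $s>r$ for free. The paper's method yields sharper structural information (the polynomial form), which it does not actually need later; yours is more elementary and uniform across the four parts.

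For part (iii), your identity $K_n(r,s)=\lvert r^2-s^2\rvert\, J_n(r,s)$ (immediate since $\lvert x\rvert\equiv r$ on $\dB_r(0)$) is shorter than the paper's region-splitting argument, which ends up invoking $r^2 J_n\leq C$ anyway. For part (iv), the paper again splits the sphere into $\{\lvert x-y\rvert\gtrless \tfrac{1}{3}\lvert y\rvert\}$ and estimates each piece; your reduction to $L_n=\log(s/r)-\tfrac{1}{2}H_n(s/r)$ with $H_n$ continuous on $[\tfrac{1}{2},\tfrac{3}{2}]$ is again a clean one-variable argument. In each case the paper's decomposition is more hands-on, while your spherical-coordinate reduction trades that for a short continuity check at $u=1$.
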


\begin{proof} 
i) This follows analogously to our corresponding estimate in the four-dimensional case \cite[(3.17)]{BuzaNg1}, but in order to make the article self-contained, we quickly give a proof here. From \eqref{eq.fundLap}, we obtain
\begin{equation}\label{eq.rderivativeI1}
\begin{aligned}
\int_{\dB_r(0)} \partial_r \frac{1} {\abs{x -y}^{n-2}}\,d\sigma(x) &= \int_{B_r(0)} \Lap_ x \frac{1}{\abs{x-y}^{n-2}}\, d\sigma(x)\\
&=-\int_{B_r(0)} (n-2)\sigma_n\delta_y(x) \, d\sigma(x).
\end{aligned}
\end{equation}
Therefore, if $s \leq r$, we have $y \in B_r(0)$ and hence obtain 
\begin{equation*}
\partial _ r \, \dashint_{\dB_r(0)} \frac{1}{\abs{x-y}^{n-2}}\, d \sigma(x) = \dashint_{\dB_r(0)} \partial_r \frac{1} {\abs{x -y}^{n-2}}\,d\sigma(x)= -\frac{(n-2)\sigma_n}{\abs{\dB_r(0)}} = -\frac{n-2}{r^{n-1}}. 
\end{equation*}
Integrating with respect to $r$, we see that there is a constants $C_1(s)$ such that
\begin{equation*}
I_n(r,s) = \dashint_{\dB_r (0)} \frac{1}{\abs{x-y}^{n-2}}\, d\sigma(x) = \frac{1}{r^{n-2}} + C_1(s).
\end{equation*}
Fixing $y$ (and hence $\abs{y} = s$) and letting $r\to \infty$, we see that $C_1(s) = 0$. This proves \eqref{eq.estI} in the first case. If instead we have $s>r$, \eqref{eq.rderivativeI1} implies
\begin{equation*}
\partial_r \, \dashint_{\dB_r(0)} \frac{1}{\abs{x-y}^{n-2}}\, d\sigma(x) = 0,
\end{equation*}
so that 
\begin{equation*}
I_n(r,s) = \dashint_{\dB_r(0)} \frac{1}{\abs{x-y}^{n-2}} \, d\sigma(x) = C_2(s).
\end{equation*}
As $s > r$, we can fix $s=\abs{y}$ and let $r \to 0$ this time to obtain 
\begin{equation*}
\lim_{r\to 0} I_n(r,s) = \dashint_{\dB_r (0)} \frac{1}{\abs{y}^{n-2}}\, d\sigma(x) = \frac{1}{s^{n-2}} = C_2(s).
\end{equation*}
This proves \eqref{eq.estI} in the second case.\\

ii) If $n=4$, we have $J_n(r,s) = I_n(r,s)$, and thus \eqref{eq.estI} yields the desired result. We can therefore assume that $n$ is an even integer satisfying $n\geq 6$. In the case $s\leq r$, we compute
\begin{equation*}
\Lap_y^{\frac{n}{2} - 2} J_n(r,s) = \dashint_{\dB_r(0)} \Lap_y^{\frac{n}{2} - 2} \frac{1}{\abs{x-y}^2} \, d\sigma(x) = \dashint_{\dB_r(0)}  \frac{C}{\abs{x-y}^{n-2}} \, d\sigma(x) = C\cdot I_n(r,s). 
\end{equation*} 
We therefore obtain
\begin{equation*}
\Lap_y^{\frac{n}{2} - 2} J_n(r,s) =\frac{C}{r^{n-2}},  \qquad  \Lap_y^{\frac{n}{2}-1} J_n(r,s) =0,
\end{equation*}
and thus, by \eqref{eq.soluformula}, we have
\begin{align*}
J_n(r,s) = \sum_{k=0}^{\frac{n}{2}-2} c_{2k+1}(r) s^{2k}  +\sum_{k=1}^{\frac{n}{2}-1} c_{n-2k}(r) s^{-2k}. 
\end{align*}
As $\lim_{s \to 0} J_n(r,s) = \frac{1}{r^2}$, we see that $c_1(r)= \frac{1}{r^2}$ and that all the coefficients in the second sum above have to vanish. This means that
\begin{equation*}
J_n(r,s) = \frac{1}{r^2} + \sum_{k=1}^{\frac{n}{2}-2} c_{2k+1}(r) s^{2k}.
\end{equation*}
Note that by definition, for any $t>0$,
\begin{align*}
(tr)^2  J_n(tr,ts) = r^2 J_n(r,s),
\end{align*}
so that $r^2 J_n(r,s) = 1 + p \big(\frac{s^2} {r^2} \big)$, where $p$ is a polynomial of degree $\frac{n}{2}-2$ with no constant term. Obviously, this is bounded when $s \leq r$, proving the first case of \eqref{eq.prelimestJ}. If $s > r$, we simply apply H\"older's inequality with $p=\frac{n}{2}-1$, yielding
\begin{align*}
J_n(r,s) &= \dashint_{ \dB_r(0)} \frac{1}{\abs{x -y}^2} \,d\sigma(x) \leq \bigg( \dashint_{\dB_r(0)} \frac{1}{\abs{x-y}^{2p}} \,d\sigma(x) \bigg)^{1/p}\\
& = \big(I_n(r,s)\big)^{2/(n-2)} = \Big(\frac{1}{s^{n-2}}\Big)^{2/(n-2)} =\frac{1}{s^2}. 
\end{align*} 
This proves the second case of \eqref{eq.prelimestJ}.\\

iii) For given $\abs{y}=s$, let $A_s:=\{x\in\RR^n \mid \abs{x}<\frac{1}{2}\abs{y} \text{ or } \abs{x}>2\abs{y}\}$ and notice, by simply plugging in both possible cases, that on $A_s$ we have
\begin{equation*}
\frac{\abs{x}+\abs{y}}{\abs{\abs{x}-\abs{y}}} \leq 3.
\end{equation*}
Hence by the triangle inequality $\abs{\abs{x}-\abs{y}}\leq \abs{x-y}$, we find
\begin{equation}\label{eq.KAs}
\frac{\abs{\abs{x}^2-\abs{y}^2}}{\abs{x-y}^2} \leq \frac{\abs{\abs{x}^2-\abs{y}^2}}{\abs{\abs{x}-\abs{y}}^2} = \frac{\abs{x}+\abs{y}}{\abs{\abs{x}-\abs{y}}} \leq 3 \qquad \text{on } A_s.
\end{equation}
On the complement of $A_s$, we have $\frac{1}{2}\abs{x} \leq \abs{y} \leq 2\abs{x}$ and therefore obtain
\begin{equation}\label{eq.KAscomp}
\frac{\abs{\abs{x}^2-\abs{y}^2}}{\abs{x-y}^2} \leq \frac{3\abs{x}^2}{\abs{x-y}^2} \qquad \text{on } \RR^n \setminus A_s.
\end{equation}
Using \eqref{eq.KAs} and \eqref{eq.KAscomp}, we can now estimate
\begin{align*}
K_n(r,s) &= \dashint_{\dB_r(0)} \frac{\abs{\abs{x}^2-\abs{y}^2}}{\abs{x-y}^2} \, d\sigma(x) \leq \dashint_{\dB_r(0)} 3\Big(1 + \frac{\abs{x}^2}{\abs{x-y}^2} \Big) d\sigma(x)\\
&= 3\bigg(1+r^2\dashint_{\dB_r(0)} \frac{1}{\abs{x-y}^2} \, d\sigma(x)\bigg) = 3(1 + r^2 J_n(r,s)).
\end{align*}
In view of \eqref{eq.estJ}, the bound \eqref{eq.estK} follows.\\

iv) Again, we already proved this (in dimension four) in \cite{BuzaNg1}, but quickly repeat the short proof here. First, we estimate the integral by
\begin{align*}
\abs{L_n(r,s)} &=\left| \dashint_{\dB_r(0)} \log \frac{\abs{y}}{\abs{x -y}} d\sigma(x) \right| \\
&\leq \frac{1}{\abs{\dB_r(0)}} \int_{\dB_r(0)\setminus \{x\, :\, \abs{x-y} < \frac{1}{3} \abs{y}\}} \left| \log \frac{\abs{y}}{\abs{x-y}} \right| d\sigma(x)\\
&\quad +\frac{1}{\abs{\dB_r(0)}} \int_{\dB_r(0)\cap \{x\, :\, \abs{x-y} < \frac{1}{3} \abs{y}\}} \left| \log \frac{\abs{y}}{\abs{x-y}} \right| d\sigma(x)\\
& =: L_n^1(r,s) + L_n^2(r,s)
\end{align*}
and then estimate the two terms $L_n^1(r,s)$ and $L_n^2(r,s)$ separately. In order to estimate the first term, note that combining the assumption $\frac{1}{2}\abs{x} \leq \abs{y} \leq \frac{3}{2}\abs{x}$ with $\abs{x-y} \geq \frac{1}{3} \abs{y}$ implies
\begin{equation*}
\frac{1}{3} \abs{y}\leq\abs{x-y}\leq\abs{x}+\abs{y}\leq 3\abs{y},
\end{equation*}
which in turn shows that $\frac{1}{3}\leq \frac{\abs{y}}{\abs{x-y}}\leq 3$ over the region where we integrate. Hence, we obtain $L_n^1(r,s) \leq \log 3$. For the second integral, we have
\begin{align*}
L_n^2(r,s) \leq \frac{1}{\big| \dB_{\frac{r}{\abs{y}}}(0)\big|} \int_{\dB_{\frac{r}{\abs{y}}}(0)\cap \left\{x\, :\, \left |x -\frac{y}{\abs{y}} \right| < \frac{1}{3}\right\}} \left| \log \frac{1}{\abs{x-\frac{y}{\abs{y}}}} \right| d \sigma(x).
\end{align*} 
Now, the assumption $\frac{1}{2}r\leq \abs{y} \leq \frac{3}{2}r$ gives $\frac{2}{3} \leq \frac {r}{\abs{y}} \leq 2$. Thus, as $ \log \frac{1}{\left| x-\frac{y}{\abs{y}} \right|}$ is integrable, we have that $L_n^2(r,s)$ is uniformly bounded on the annulus $\frac{2}{3} \leq \frac {r}{\abs{y}} \leq 2$. The uniform bounds for $L_n^1(r,s)$ and $L_n^2(r,s)$ imply \eqref{eq.estJ}.
\end{proof}

\section{The Rotationally Symmetric Case}\label{sect.symm}

In this section, we prove Theorem \ref{thm.Rn} for a conformal metric $g = e^{2w}\abs{dx}^2$ on $\RR^n\setminus\{0\}$ when $w=w(r)$ is a \emph{radial} function. Here and in the following, we always use the notation $r=\abs{x}$. In this situation, \eqref{eq.Qn} becomes an ODE, but instead of solving for the nonlinearity in this ODE as we did in our previous four-dimensional work \cite{BuzaNg1} by an explicit integration, we rather apply the integral estimates derived in the previous section. We first prove the following lemma.

\begin{lemma} \label{lemma.fa}
Let $n\geq 4$ be an even integer and let $w \in C^\infty (\RR^n\setminus \{0\})$ be rotationally symmetric such that $g = e^{2w}\abs{dx}^2$ has finite total $Q$-curvature
\begin{equation}\label{eq.finitetotQ}
\int_{\RR^n} \abs{Q_{g,n}} dV_g = \int_{\RR^n} \abs{Q_{g,n}} e^{nw} dx < \infty.
\end{equation}
For $\alpha\in\RR$, define
\begin{equation*}
f_\alpha(x) := \frac {1}{\gamma_n} \int_{\RR^n} \log\Big(\frac{\abs{y}}{\abs{x-y}}\Big)\, Q_{g,n}(y)\,e^{nw(y)}\,dy+\alpha\log\abs{x}.
\end{equation*}
Then $f_\alpha(x) = f_\alpha(r)$ is radially symmetric and satisfies 
\begin{equation}\label{eq.limitdifference}
\lim_{r \to \infty } r\frac {df_\alpha}{dr}(r) - \lim_{r \to 0}  r\frac {df_\alpha}{dr}(r) = - \frac{1}{\gamma_n} \int_{\RR^n} Q_{g,n}\, dV_g.
\end{equation}
Moreover, there exists a constant $C\in\RR$ such that
\begin{equation}\label{eq.asympf}
\abs{x}\abs{\nabla f_\alpha} < C, \qquad \abs{x}^2\abs{\Lap f_\alpha} < C.
\end{equation}
\end{lemma}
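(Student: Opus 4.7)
The plan is to exploit the rotational symmetry of the density $Q_{g,n} e^{nw}$ and reduce all three claims to the integral estimates of Proposition \ref{prop.intest} by sphere-averaging. Since $w$ is radial, so is $(-\Lap)^{n/2} w$, and hence by \eqref{eq.Qn} also $Q_{g,n}$. Performing a rotation $y \mapsto Ry$ under the integral then gives $f_\alpha(Rx) = f_\alpha(x)$ for every $R \in O(n)$, so $f_\alpha$ is radial and we may write $f_\alpha = f_\alpha(r)$ for $r = \abs{x}$.

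For the limit identity \eqref{eq.limitdifference}, I would first compute $r f_\alpha'(r) = x \cdot \nabla f_\alpha(x)$ by differentiating under the integral (justified since $\abs{x-y}^{-1}$ is locally integrable in $y$ and $\abs{Q_{g,n}}e^{nw}$ is globally integrable by \eqref{eq.finitetotQ}). Using the algebraic identity
\begin{equation*}
\frac{x\cdot(x-y)}{\abs{x-y}^2} = \frac{1}{2} + \frac{\abs{x}^2 - \abs{y}^2}{2\abs{x-y}^2},
\end{equation*}
and then averaging over $\dB_r(0)$ -- which is legitimate because the left-hand side is already constant on that sphere -- I arrive at
\begin{equation*}
r f_\alpha'(r) = \alpha - \frac{1}{2\gamma_n}\int_{\RR^n} Q_{g,n}\, e^{nw}\,dy - \frac{1}{2\gamma_n}\int_{\RR^n} (r^2 - \abs{y}^2)\, J_n(r,\abs{y})\, Q_{g,n}(y)\, e^{nw(y)}\,dy.
\end{equation*}
The uniform bound $\abs{(r^2-\abs{y}^2)J_n(r,\abs{y})} = K_n(r,\abs{y}) \leq C$ from \eqref{eq.estK} dominates the last integrand by $C\abs{Q_{g,n}}e^{nw}$, so dominated convergence applies. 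The pointwise limits are supplied by \eqref{eq.prelimestJ}: as $r\to\infty$ with $y$ fixed, $r^2 J_n(r,\abs{y}) \to 1$ and $\abs{y}^2 J_n(r,\abs{y})\to 0$, so $(r^2-\abs{y}^2)J_n(r,\abs{y}) \to 1$; as $r \to 0$, continuity of $\abs{x-y}^{-2}$ on $\dB_r(0)$ for small $r$ gives $J_n(r,\abs{y}) \to \abs{y}^{-2}$, so $(r^2-\abs{y}^2)J_n(r,\abs{y}) \to -1$. Subtracting the two resulting limits of $r f_\alpha'(r)$ produces $-\gamma_n^{-1}\int Q_{g,n}\,dV_g$, yielding \eqref{eq.limitdifference}.

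The asymptotic bounds \eqref{eq.asympf} then follow from the same machinery. Since $\abs{\nabla f_\alpha(x)} = \abs{f_\alpha'(r)}$ by radial symmetry, the formula for $rf_\alpha'(r)$ together with $K_n \leq C$ and the finite total $Q$-curvature hypothesis directly yields $\abs{x}\abs{\nabla f_\alpha} \leq C$. For the Laplacian, differentiating under the integral using $\Lap_x \log \abs{x-y} = (n-2)/\abs{x-y}^2$ and $\Lap \log \abs{x} = (n-2)/\abs{x}^2$ gives
\begin{equation*}
\Lap f_\alpha(x) = -\frac{n-2}{\gamma_n}\int_{\RR^n}\frac{Q_{g,n}(y)\,e^{nw(y)}}{\abs{x-y}^2}\,dy + \frac{(n-2)\alpha}{\abs{x}^2},
\end{equation*}
and sphere-averaging produces $J_n(r,\abs{y})$ inside the integral; multiplying by $r^2$ and applying \eqref{eq.estJ} then delivers $\abs{x}^2\abs{\Lap f_\alpha}\leq C$.

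The main obstacle I anticipate is controlling the boundary regime $\abs{y}\approx r$ in the formula for $rf_\alpha'(r)$: the naive pointwise bound $\abs{x\cdot(x-y)}/\abs{x-y}^2 \leq \abs{x}/\abs{x-y}$ is not integrable uniformly in $r$, so the crux is really the algebraic splitting above, which trades the singular ratio for a harmless constant plus a term uniformly bounded by $K_n$ via \eqref{eq.estK}. Once that decomposition is in place, the rest is a routine combination of dominated convergence with the estimates of Proposition \ref{prop.intest}.
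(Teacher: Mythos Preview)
Your proposal is correct and follows essentially the same approach as the paper: both compute $r f_\alpha'(r)$ via the identity $x\cdot(x-y)/\abs{x-y}^2 = \tfrac{1}{2} + (\abs{x}^2-\abs{y}^2)/(2\abs{x-y}^2)$, sphere-average to reduce to the integrals $J_n$ and $K_n$ of Proposition~\ref{prop.intest}, and then use \eqref{eq.estK} for dominated convergence and \eqref{eq.estJ} for the Laplacian bound. The only cosmetic difference is that the paper evaluates the pointwise limits directly from the unaveraged kernel $\tfrac{\abs{x-y}^2+\abs{x}^2-\abs{y}^2}{\abs{x-y}^2}$, whereas you pass through the asymptotics of $J_n$ in \eqref{eq.prelimestJ}; both yield the same values.
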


\begin{proof}
The radial symmetry of $f_\alpha$ follows easily from the fact that $w$ and $\log |x|$ are both radially symmetric. As in Section 3 of \cite{BuzaNg1}, we have
\begin{equation}\label{eq.ddrlog}
\frac {d}{dr} \log \frac{\abs{y}}{\abs{x-y}} = -\left \langle \frac{\nabla_x \abs{x-y}}{\abs{x-y}},\frac{x}{\abs{x}} \right\rangle = -\left \langle \frac{x-y}{\abs{x-y}^2},\frac{x}{\abs{x}} \right\rangle= - \frac{\abs{x-y}^2+\abs{x}^2-\abs{y}^2}{2\abs{x}\abs{x-y}^2}.
\end{equation}
Hence, using the radial symmetry of $w$, we obtain
\begin{align*}
r \frac{df_\alpha}{dr}(r)  &= - \frac{1}{2\gamma_n} \int_{\RR^n} \bigg(\frac{\abs{x-y}^2+\abs{x}^2-\abs{y}^2}{\abs{x-y}^2}\bigg) \, Q_{g,n}(y) \, dV_g(y) + \alpha\\
& = - \frac{1}{2\gamma_n} \int_{\RR^n} \bigg(1 + \dashint_{\dB_{\abs{x}}(0)} \frac{\abs{z}^2-\abs{y}^2}{\abs{z-y}^2} \, d\sigma(z) \bigg) Q_{g,n}(y) \, dV_g(y) + \alpha.
\end{align*}
Now, we get that 
\begin{align*}
\bigg| \int_{\RR^n} \bigg(1 + \dashint_{\dB_{\abs{x}}(0)} &\frac{\abs{z}^2-\abs{y}^2}{\abs{z-y}^2} \, d\sigma(z) \bigg) Q_{g,n}(y) \, dV_g(y) \bigg| \\
&\leq C \int_{\RR^n} \Big(1 + \sup_{0<r,s<\infty} K_n(r,s) \Big) \abs{Q_{g,n}(y)}\, dV_g(y)\\
&\leq C \int_{\RR^n} \abs{Q_{g,n}(y)} \, dV_g(y) < \infty,
\end{align*}
where $K_n(r,s)$ is defined in \eqref{eq.defK} and we used Proposition \ref{prop.intest} iii) as well as the assumption of finite total $Q$-curvature \eqref{eq.finitetotQ} in the last steps. We can therefore apply the dominated convergence theorem, implying
\begin{equation*}
\lim_{r\to 0 } r \frac {df_\alpha}{dr}(r) = - \frac{1}{2\gamma_n} \int_{\RR^n} \lim_{\abs{x}\to 0} \bigg( \frac{\abs{x-y}^2+\abs{x}^2-\abs{y}^2}{\abs{x-y}^2} \bigg) Q_{g,n}(y) \, dV_g(y) + \alpha = \alpha.
\end{equation*}
Similarly, as for any fixed $y$ we have
\begin{equation*}
\lim_{\abs{x}\to \infty}  \bigg( \frac{\abs{x-y}^2+\abs{x}^2-\abs{y}^2}{\abs{x-y}^2} \bigg) =2,
\end{equation*}
we get, again by the dominated convergence theorem,
\begin{align*}
\lim_{r\to \infty } r \frac {df_\alpha}{dr}(r) &= - \frac{1}{2\gamma_n} \int_{\RR^n} \lim_{\abs{x}\to \infty} \bigg( \frac{\abs{x-y}^2+\abs{x}^2-\abs{y}^2}{\abs{x-y}^2} \bigg) Q_{g,n}(y) \, dV_g(y) + \alpha\\
&= - \frac{1}{\gamma_n} \int_{\RR^n} Q_{g,n}(y) \, dV_g(y) + \alpha\\
&= - \frac{1}{\gamma_n} \int_{\RR^n} Q_{g,n}(y) \, dV_g(y) + \lim_{r\to 0 } r \frac {df_\alpha}{dr}(r),
\end{align*}
which proves \eqref{eq.limitdifference}. In order to prove \eqref{eq.asympf}, we work in spherical coordinates $x=r \sigma$, where $r>0$ and $\sigma\in\Sph^{n-1}$. In these coordinates, we have $\nabla = (\frac{d}{dr}, \frac{1}{r} \nabla_{\Sph^{n-1}})$. Therefore, as $f_\alpha(x)$ is rotationally symmetric, the above yields
\begin{equation*}
\limsup_{\abs{x}\to 0} \, \abs{x}\abs{\nabla f_\alpha(x)} < \infty \qquad\text{and}\qquad \limsup_{\abs{x}\to \infty} \, \abs{x}\abs{\nabla f_\alpha(x)} < \infty,
\end{equation*}
and hence the first claim in \eqref{eq.asympf}. To prove the second claim, we note that, using \eqref{eq.Lapspherical}, we find
\begin{equation}\label{eq.Laplog}
\Lap \log r = \frac {n-2} {r^2},
\end{equation}
and therefore
\begin{align*}
\Lap f_\alpha(x) &= - \frac{n-2}{\gamma_n} \int_{\RR^n}  \frac {1}{\abs{x-y}^2} \,  Q_{g,n}(y) \, dV_g(y) + \alpha\cdot \frac{n-2}{r^2}\\
& = - \frac{n-2}{\gamma_n} \int_{\RR^n} \left(\dashint_{\dB_{\abs{x}}(0)} \frac {1}{\abs{z-y}^2} \, d\sigma(z) \right) Q_{g,n}(y) \, dV_g(y) + \alpha\cdot \frac{n-2}{r^2},
\end{align*}
where -- as above -- we used the rotational symmetry of $w$ and hence of $Q_{g,n}$. Using $J_n(r,s)$ defined in \eqref{eq.defJ} and estimated in \eqref{eq.estJ}, we then easily get
\begin{align*}
\abs{\Lap f_\alpha(x)} &\leq C \bigg| \int_{\RR^n} \left(\dashint_{\dB_{\abs{x}}(0)} \frac {1}{\abs{z-y}^2} \, d\sigma(z) \right) Q_{g,n}(y) \, dV_g(y) \bigg| + \frac{C}{r^2}\\
&\leq C \int_{\RR^n}  \sup_{0<r,s<\infty} J_n(r,s) \cdot \abs{Q_{g,n}(y)} \, dV_g(y) + \frac{C}{r^2}\\
&\leq \frac{C}{r^2} \int_{\RR^n} \abs{Q_{g,n}(y)} \, dV_g(y) + \frac{C}{r^2} = \frac{C}{r^2}.
\end{align*}
Therefore $\abs{x}^2\abs{\Lap f_\alpha(x)}$ is uniformly bounded, thus finishing the proof of the lemma.
\end{proof}

Next, we prove that $w$ agrees with some $f_\alpha$ up to a constant.

\begin{lemma} \label{lemma.wisfa}
Let $w$ be as in Lemma \ref{lemma.fa} and assume in addition either that
\begin{equation}\label{eq.posscalar}
\inf_{\RR^n\setminus B_{r_1}(0)} \R_g(x) \geq 0, \qquad \inf_{B_{r_2}(0)} \R_g(x) \geq 0,
\end{equation}
for some $0<r_2\leq r_1 < \infty$, or alternatively that for some constant $C\in\RR$
\begin{equation}\label{eq.asympw}
\abs{x}\abs{\nabla w} < C, \qquad \abs{x}^2\abs{\Lap w} < C.
\end{equation}
Then there exist $\alpha$ and $C\in \RR$ such that 
\begin{equation*}
w(x)= f_\alpha(x)+ C =\frac {1}{\gamma_n} \int_{\RR^n} \log\Big(\frac{\abs{y}}{\abs{x-y}}\Big)\, Q_{g,n}(y)\,e^{nw(y)}\,dy+\alpha\log\abs{x} + C.
\end{equation*}
\end{lemma}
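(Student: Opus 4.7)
The plan is to define the radial difference $h_\alpha := w - f_\alpha$, show it satisfies $(-\Lap)^{n/2}h_\alpha = 0$ on $\RR^n\setminus\{0\}$, use \eqref{eq.soluformula} to write down its explicit form, and then rule out every nontrivial coefficient except by absorbing a residual $\log|x|$ into the parameter $\alpha$.

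The first step is a distributional computation of $(-\Lap)^{n/2}f_\alpha$ on all of $\RR^n$. Identity \eqref{eq.fundLapn2}, applied both under the $y$-integral and (with $y=0$) to the $\alpha\log|x|$ term, yields
\[
(-\Lap)^{n/2}f_\alpha \;=\; 2\,Q_{g,n}\,e^{nw} \;-\; 2\gamma_n\alpha\,\delta_0
\]
in the sense of distributions on $\RR^n$. Restricting to $\RR^n\setminus\{0\}$ the delta term vanishes, while \eqref{eq.Qn} identifies the remaining right-hand side with $(-\Lap)^{n/2}w$, hence $(-\Lap)^{n/2}h_\alpha \equiv 0$ there. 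Since $h_\alpha$ is radial by Lemma \ref{lemma.fa}, \eqref{eq.soluformula} forces an expansion
\[
h_\alpha(r) \;=\; c_0 + c_n\log r + \sum_{j=1}^{n/2-1}\bigl(a_j\,r^{-2j} + b_j\,r^{2j}\bigr)
\]
for real constants $c_0, c_n, a_j, b_j$.

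Under hypothesis \eqref{eq.asympw}, combining with \eqref{eq.asympf} from Lemma \ref{lemma.fa} gives that $r|h_\alpha'(r)|$ is uniformly bounded on $(0,\infty)$. Direct differentiation of the expansion produces $r\,h_\alpha'(r) = c_n + \sum_{j}\bigl(-2j\,a_j\,r^{-2j} + 2j\,b_j\,r^{2j}\bigr)$; boundedness as $r\to 0$ forces every $a_j = 0$, and boundedness as $r\to\infty$ forces every $b_j = 0$. Under \eqref{eq.posscalar}, the argument is more delicate: I would use the formula $\R_g = -2(n-1)e^{-2w}\bigl(\Lap w + \tfrac{n-2}{2}|\nabla w|^2\bigr)$ to recast $\R_g\geq 0$ as superharmonicity of $v := e^{(n-2)w/2}$, which for radial $v$ reads $(r^{n-1}v')' \leq 0$. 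A short ODE argument based on positivity of $v$ and monotonicity of $r^{n-1}v'(r)$ shows that $v$, and hence $w$, is bounded above at infinity and bounded below at the origin. Combining these one-sided bounds with the at-most-logarithmic growth of $f_\alpha$ coming from \eqref{eq.asympf}, with completeness $\int_1^\infty e^{w(r)}\,dr = \infty$, and with finite area $\int_0 e^{nw(r)}r^{n-1}\,dr < \infty$ eliminates, respectively, $b_j>0$, $b_j<0$, $a_j<0$, $a_j>0$, working inductively from the highest power.

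In either case the expansion collapses to $h_\alpha(r) = c_0 + c_n\log r$. Because $f_\alpha(x) + c_n\log|x| = f_{\alpha+c_n}(x)$ by the very definition of $f_\alpha$, setting $\alpha':=\alpha+c_n$ and $C:=c_0$ produces $w = f_{\alpha'} + C$, completing the proof. The main obstacle I anticipate is the scalar-curvature case: each of the four sign-classes of nontrivial power terms has to be killed by a different one of the global hypotheses built into the lemma statement, and this case-by-case matching is what makes $\R_g\geq 0$ effectively substitute for the direct derivative bounds in \eqref{eq.asympw}.
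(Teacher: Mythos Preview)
Your setup and the argument under hypothesis \eqref{eq.asympw} are correct and essentially match the paper. The gap is in your treatment of the scalar-curvature hypothesis \eqref{eq.posscalar}: to kill the cases $b_j<0$ and $a_j>0$ you invoke completeness at infinity and finite area at the origin, but neither of these is a hypothesis of Lemma~\ref{lemma.wisfa} as stated (they enter the paper only later, in Lemma~\ref{lemma.isoperimetric}). Your superharmonicity-of-$e^{(n-2)w/2}$ argument is correct as far as it goes and does yield $w$ bounded above at infinity and bounded below at the origin, which eliminates $b_j>0$ and $a_j<0$; but the remaining two sign cases are left unproved within the lemma's actual hypotheses.

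The paper closes this gap by working directly with the identity
\[
\abs{x}^2 \R_g e^{2w} = -2(n-1)\Big(\abs{x}^2\Lap w + \big(\tfrac{n}{2}-1\big)\big(\abs{x}\abs{\nabla w}\big)^2\Big),
\]
which under \eqref{eq.posscalar} is non-negative near infinity and near the origin. Writing $w=f_\alpha+C+p(r)$ with $p$ the leftover sum of even powers, the bounds \eqref{eq.asympf} make the $f_\alpha$ contributions to both $\abs{x}^2\Lap w$ and $\abs{x}\abs{\nabla w}$ bounded, so if $p$ has a nonzero leading term $c\,r^{2k}$ then the dominant contribution on the right is $-2(n-1)(\tfrac{n}{2}-1)(2kc)^2 r^{4k}$, which tends to $-\infty$ regardless of the sign of $c$. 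This contradicts $\R_g\geq 0$ at infinity (for positive powers) or at the origin (for negative powers), with no appeal to completeness or finite area. In effect the squared gradient term already carries the sign information you were extracting from the extra global assumptions.
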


\begin{proof}
We first define the function
\begin{equation*}
f(x) := f_0(x) =\frac {1}{\gamma_n} \int_{\RR^n} \log\Big(\frac{\abs{y}}{\abs{x-y}}\Big)\, Q_{g,n}(y)\,e^{nw(y)}\,dy.
\end{equation*}
From \eqref{eq.fundLapn2}, we see that 
\begin{equation*}
(-\Lap)^{n/2}f(x) = \frac {1}{\gamma_n} \int_{\RR^n} \Big( (-\Lap_x)^{n/2}\log \frac{1}{\abs{x-y}}\Big) Q_{g,n}(y)\,e^{nw(y)}\,dy = 2Q_{g,n}(x)\,e^{nw(x)}.
\end{equation*}
Thus, from the definition of $Q$-curvature in \eqref{eq.Qn}, we obtain $(-\Lap)^{n/2}(w-f) = 0$. As $(w-f)$ is a radial function, we can use \eqref{eq.soluformula} to conclude that
\begin{equation*}
(w-f)(x) = c_1 + c_n\log\abs{x} + \sum_{k=1}^{\frac{n}{2}-1} c_{2k+1}\abs{x}^{2k} + \sum_{k=1}^{\frac{n}{2}-1} c_{n-2k}\abs{x}^{-2k}.
\end{equation*}
Setting $C:=c_1$ and $\alpha:= c_n$, we can rewrite this as
\begin{equation}\label{eq.wfsums}
w(x) = f_\alpha(x) + C + \sum_{k=1}^{\frac{n}{2}-1} c_{2k+1}\abs{x}^{2k} + \sum_{k=1}^{\frac{n}{2}-1} c_{n-2k}\abs{x}^{-2k}.
\end{equation}
We need to show that all the coefficients $c_\ell$ in the above sums vanish. This is done similarly as in the four-dimensional case, see \cite[Lemma 2.2]{BuzaNg1}, using either the scalar curvature assumption \eqref{eq.posscalar} or the bounds \eqref{eq.asympw}. In fact, the formula for the scalar curvature of a conformally flat metric $g=e^{2w}\abs{dx}^2$ is
\begin{equation}\label{eq.Rconf}
\R_g e^{2w} = - \tfrac{4(n-1)}{n-2} e^{-\frac{n-2}{2}w} \Lap \big(e^{\frac{n-2}{2}w}\big) = - 2(n-1)\big( \Lap w + \big(\tfrac{n}{2}-1\big)\abs{\nabla w}^2\big),
\end{equation}
and therefore
\begin{equation}\label{eq.formulaR}
\abs{x}^2 \R_g e^{2w} = - 2(n-1)\big( \abs{x}^2 \Lap w + \big(\tfrac{n}{2}-1\big) (\abs{x}\abs{\nabla w})^2\big).
\end{equation}
By the scalar curvature assumption \eqref{eq.posscalar}, the quantity in \eqref{eq.formulaR} is non-negative for $\abs{x}\geq r_1$ as well as for $\abs{x}\leq r_2$. Under the alternative assumption \eqref{eq.asympw}, this expression does not have a sign, but it is uniformly bounded. We now use the bounds for $f_\alpha(x)$ from \eqref{eq.asympf} to conclude that if any of the odd-index coefficients $c_{2k+1}$ in \eqref{eq.wfsums} does not vanish, then the right hand side of \eqref{eq.formulaR} will tend to $-\infty$ as $\abs{x}\to\infty$, giving a contradiction. Similarly, if any of the even-index coefficients $c_{n-2k}$ in \eqref{eq.wfsums} does not vanish, then the left hand side of \eqref{eq.formulaR} tends to $-\infty$ as $\abs{x}\to 0$, yielding again a contradiction. Therefore $w(x)=f_\alpha(x)+C$.
\end{proof}

\begin{rem}
The proof of Lemma \ref{lemma.wisfa} shows that the scalar curvature assumption \eqref{eq.posscalar} can be replaced by the (much less geometric) condition that $\abs{x}^2 \R_g e^{2w}$ is bounded from below. Note that the metric $g=e^{2\abs{x}^2}\abs{dx}^2$ from the counter-example stated in Remark \ref{counterexample} satisfies $\R_g\to 0$ as $\abs{x}\to\infty$, but has $\abs{x}^2 \R_g e^{2w} \to -\infty$ as $\abs{x}\to\infty$. Hence, (the first part of) condition \eqref{eq.posscalar} cannot be weakened to $\liminf_{\abs{x}\to \infty} \R_g(x) \geq 0$ as claimed in \cite{NX10}. A similar argument shows that also the second part of condition \eqref{eq.posscalar} cannot be weakened to $\liminf_{\abs{x}\to 0} \R_g(x) \geq 0$.
\end{rem}

Combining Lemma \ref{lemma.wisfa} with Equation \eqref{eq.limitdifference}, we immediately obtain the following consequence, which corresponds to \cite[Corollary 2.3]{BuzaNg1} in the four-dimensional case.
\begin{cor}\label{cor.CGBw}
Let $g=e^{2w}\abs{dx}^2$ be as in Lemma \ref{lemma.wisfa}. Then
\begin{equation*}
\chi(\RR^n) - \frac{1}{\gamma_n}\int_{\RR^n} Q_{g,n} \, dV_g = \nu - \mu,
\end{equation*}
where
\begin{equation*}
\nu :=\lim_{r\to\infty}r\frac{dw}{dr}(r)+1,\qquad \mu :=\lim_{r\to 0}r\frac{dw}{dr}(r).
\end{equation*}
\end{cor}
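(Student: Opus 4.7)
The plan is to deduce the corollary as an essentially immediate consequence of Lemma \ref{lemma.wisfa} combined with the limit identity \eqref{eq.limitdifference} from Lemma \ref{lemma.fa}. All the analytical content has already been established; what remains is a short bookkeeping argument.

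First I would invoke Lemma \ref{lemma.wisfa} to write $w(x) = f_\alpha(x) + C$ for suitable constants $\alpha, C \in \RR$. Since $C$ is a constant and $f_\alpha$ is radially symmetric (by Lemma \ref{lemma.fa}), differentiating with respect to $r$ gives $\frac{dw}{dr}(r) = \frac{df_\alpha}{dr}(r)$, and hence the boundary quantities satisfy
\begin{equation*}
\lim_{r\to\infty} r\frac{dw}{dr}(r) = \lim_{r\to\infty} r\frac{df_\alpha}{dr}(r), \qquad \lim_{r\to 0} r\frac{dw}{dr}(r) = \lim_{r\to 0} r\frac{df_\alpha}{dr}(r).
\end{equation*}
In particular both limits exist, since the right-hand sides were identified in the course of proving Lemma \ref{lemma.fa} (the limit at $0$ equals $\alpha$, and the limit at $\infty$ equals $\alpha - \frac{1}{\gamma_n}\int_{\RR^n} Q_{g,n}\, dV_g$).

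Next I would apply the identity \eqref{eq.limitdifference} directly to obtain
\begin{equation*}
\lim_{r\to\infty} r\frac{dw}{dr}(r) - \lim_{r\to 0} r\frac{dw}{dr}(r) = -\frac{1}{\gamma_n}\int_{\RR^n} Q_{g,n}\, dV_g.
\end{equation*}
Finally, using the elementary fact that $\chi(\RR^n) = 1$, I would rearrange the previous display as
\begin{equation*}
\chi(\RR^n) - \frac{1}{\gamma_n}\int_{\RR^n} Q_{g,n}\, dV_g = \Big(\lim_{r\to\infty} r\frac{dw}{dr}(r) + 1\Big) - \lim_{r\to 0} r\frac{dw}{dr}(r) = \nu - \mu,
\end{equation*}
which is exactly the claimed formula.

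There is no real obstacle at this stage: the genuinely hard work was already carried out in the preceding two lemmas, namely the dominated convergence argument (relying on the integral bound from Proposition \ref{prop.intest} iii) and the finite total $Q$-curvature hypothesis) to evaluate the limits of $r\,df_\alpha/dr$, and the scalar-curvature/asymptotic argument using \eqref{eq.formulaR} to rule out the spurious polynomial and negative-power terms in the representation of $w$. Once those are in place, the corollary is a one-line algebraic rearrangement.
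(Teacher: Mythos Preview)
Your proposal is correct and matches the paper's approach exactly: the paper states that the corollary follows immediately by combining Lemma \ref{lemma.wisfa} with \eqref{eq.limitdifference}, which is precisely the argument you spell out (identify $w$ with $f_\alpha + C$, apply the limit identity, and use $\chi(\RR^n)=1$).
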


To finish the proof of Theorem \ref{thm.Rn} in the rotationally symmetric case, it remains to express the two limits in Corollary \ref{cor.CGBw} as isoperimetric ratios. Let us first recall the mixed volumes $V_k(\Omega)$ defined by Trudinger \cite{T97} for a convex domain $\Omega\subseteq \RR^n$. We can restrict to the special situation where $\Omega$ is a ball $B_r(0)$ and $k=n$ or $k=n-1$, in which case we obtain the following.

\begin{defn}\label{defn.ratios}
We define the $k$-volumes $V_k(r)$ for $k=n$ or $k=n-1$ by
\begin{equation}\label{eq.Vs1}
\begin{aligned}
V_n(r) &=\int_{B_r(0)} e^{nw}\, dx, \\
V_{n-1}(r) &=\frac{1}{n}\int_{\dB_r(0)} e^{(n-1)w}\, d\sigma(x).
\end{aligned}
\end{equation}
Moreover, we define the isoperimetric ratio $C_{n-1,n}(r)$ by
\begin{equation}\label{eq.defiso}
C_{n-1,n}(r) = \frac{V_{n-1}(r)^{n/(n-1)}}{\omega_n^{1/(n-1)} V_n(r)} = \frac{\vol_g(\dB_r(0))^{n/(n-1)}}{n\, \sigma_n^{1/(n-1)}\,\vol_g(B_r(0))},
\end{equation}
where $\omega_n = \sigma_n/n = \abs{B_1(0)}$ is the Euclidean volume of a unit ball in $\RR^n$. Note that the normalisation in \eqref{eq.defiso} is such that $C_{n-1,n}(r) \equiv 1$ on Euclidean space.
\end{defn}

Theorem \ref{thm.Rn} for rotationally symmetric metrics then follows from Corollary \ref{cor.CGBw} combined with the following result.

\begin{lemma}\label{lemma.isoperimetric}
Let $w$ be as in Lemma \ref{lemma.wisfa} and assume in addition that $g$ is complete at infinity and has finite area over the origin. Then we have
\begin{align*}
\nu &=\lim_{r\to\infty}r\frac{dw}{dr}(r) + 1 = \lim_{r\to\infty} C_{n-1,n}(r),\\
\mu &= \lim_{r\to 0}r\frac{dw}{dr}(r) =\lim_{r\to 0} C_{n-1,n}(r) -1.
\end{align*}
\end{lemma}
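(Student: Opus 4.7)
My plan is to express both sides in terms of $A(r) := r^n e^{nw(r)}$ and $B(r) := \int_0^r s^{n-1}e^{nw(s)}\,ds$. Since $w = w(r)$ is radial, I would first observe that $V_n(r) = \sigma_n B(r)$ and $V_{n-1}(r) = \omega_n\, r^{n-1} e^{(n-1)w(r)}$, which together with $\sigma_n = n\omega_n$ yields $C_{n-1,n}(r) = A(r)/(nB(r))$. A direct calculation then gives $B'(r) = A(r)/r$ and $A'(r) = (n/r)\,A(r)(1+rw'(r))$, hence
\begin{equation*}
\frac{A'(r)}{B'(r)} = n\bigl(1+rw'(r)\bigr).
\end{equation*}
By Lemma \ref{lemma.wisfa} (which writes $w = f_\alpha + C$), the bound \eqref{eq.asympf}, and equation \eqref{eq.limitdifference}, the function $rw'(r)$ is uniformly bounded on $(0,\infty)$, tends to $\mu$ as $r \to 0$, and tends to $\nu - 1$ as $r \to \infty$. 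Once L'Hopital's rule is justified, $A/B$ would thus tend to $n(\mu+1)$ at the origin and to $n\nu$ at infinity, which is exactly what the lemma asserts.

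The remaining task is to verify the hypotheses of L'Hopital. At the origin, $B(r) \to 0$ by the finite-area assumption. Integrating $rw'(r) \to \mu$ from $r$ up to a fixed $r_0$ shows that for any $\epsilon > 0$ and $r$ small, $e^{nw(r)} \leq C\,r^{n(\mu-\epsilon)}$ and hence $A(r) \leq C\,r^{n(\mu+1-\epsilon)}$; the corresponding lower bound further implies that $\mu < -1$ would force $s^{n-1}e^{nw(s)}$ to be non-integrable near $0$, so finite area forces $\mu \geq -1$. Whenever $\mu > -1$, choosing $\epsilon < \mu+1$ yields $A(r) \to 0$, and the $0/0$ form of L'Hopital produces the limit $A/B \to n(\mu+1)$. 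An analogous analysis at infinity uses completeness at infinity to force $\nu \geq 0$, shows $A(r), B(r) \to \infty$ when $\nu > 0$, and covers the case $\nu = 0$ with $B(\infty) = \infty$ by the $\infty/\infty$ form of L'Hopital (which only requires the denominator to tend to $\infty$).

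The hard part will be the two borderline cases where these polynomial estimates break down: $\mu = -1$ at the origin, and $\nu = 0$ with $B(\infty) < \infty$ at infinity. In both, I would use the extra input $|rw'(r)| \leq C$ (from Lemma \ref{lemma.fa} applied to $w = f_\alpha + C$), which translates into $|(\log A)'(r)| \leq C'/r$ and thus makes $\log A$ Lipschitz in the variable $\log r$. If $A(r_k) \geq \delta$ along some sequence approaching the problematic endpoint, then $A$ would remain bounded below by $\delta/C''$ on logarithmic intervals of uniformly positive length about each $r_k$, forcing $\int A(s)/s\,ds$ to diverge near that endpoint. Since finite area (at the origin) and the hypothesis $B(\infty) < \infty$ (at infinity) both imply the integrability of $A(s)/s$ near the relevant endpoint, this contradiction forces $A(r) \to 0$. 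Combined with $B(r) \to 0$ at the origin this allows the $0/0$ L'Hopital rule to conclude in the $\mu = -1$ case, and in the bounded-$B$ case at infinity it directly yields $A/B \to 0 = n\nu$, completing the proof.
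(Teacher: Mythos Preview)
Your proof is correct and follows essentially the same L'H\^opital strategy as the paper: both rewrite $C_{n-1,n}(r)$ as a ratio involving $r^n e^{nw(r)}$ and $\int_0^r s^{n-1}e^{nw(s)}\,ds$ and compute the quotient of derivatives, which yields $1+rw'(r)$. Your handling of the borderline cases $\mu=-1$ and $\nu=0$ with $B(\infty)<\infty$ (via the Lipschitz-in-$\log r$ control on $\log A$ coming from \eqref{eq.asympf}) is in fact more careful than the paper's, which simply asserts that finite area gives $V_{n-1}(r)\to 0$ at the origin and that bounded $V_n$ gives $e^{nw(r)}\to 0$ at infinity without spelling out the oscillation argument.
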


\begin{proof}
Due to the rotational symmetry of $w(x)$, we can rewrite the $k$-volumes defined in \eqref{eq.Vs1} as
\begin{equation}\label{eq.Vs2}
\begin{aligned}
V_n(r) &= \int_0^r \int_{\dB_s(0)} e^{nw(s)} d\sigma ds = \sigma_n \int_0^r s^{n-1}e^{nw(s)} ds,\\
V_{n-1}(r) &= \frac{1}{n}\int_{\dB_r(0)} e^{(n-1)w(r)}\, d\sigma = \frac{1}{n}\sigma_n r^{n-1} e^{(n-1)w(r)},
\end{aligned}
\end{equation}
yielding
\begin{equation}\label{eq.Vderivatives}
\begin{aligned}
\frac{d}{dr} V_n(r) &=\sigma_n r^{n-1}e^{nw(r)},\\
\frac{d}{dr} V_{n-1}(r) &= \frac{n-1}{n}\sigma_n r^{n-2} e^{(n-1)w(r)} \Big[ r \frac{dw}{dr}(r) +1\Big].
\end{aligned}
\end{equation}
Due to the assumption that $g=e^{2w}\abs{dx}^2$ has finite area over the origin, we obtain that $V_n(r)\to 0$ and $V_{n-1}(r)\to 0$ as $r\to 0$ and therefore by L'H\^{o}pital's rule
\begin{equation*}
\lim_{r\to 0} C_{n-1,n}(r) = \lim_{r\to 0} \frac{\frac{n}{n-1}V_{n-1}(r)^\frac{1}{n-1}\cdot \frac{d}{dr}V_{n-1}(r)}{\omega_n^{1/(n-1)} \frac{d}{dr}V_n(r)} = \lim_{r\to 0} r\frac{dw}{dr}(r) +1.
\end{equation*}
For $r\to\infty$, there are several cases we have to consider.\\
 
Since $V_{n-1}(r)$ is monotone, we have $r\frac{dw}{dr}(r) +1 \geq 0$ for all $r$ by \eqref{eq.Vderivatives}. In the case where $\lim_{r\to \infty} r\frac{dw}{dr}(r) + 1> 0$, \eqref{eq.Vs2}--\eqref{eq.Vderivatives} imply that both $V_n(r)$ and $V_{n-1}(r)$ tend to infinity and therefore we can again apply L'H\^{o}pital's rule as above, obtaining
\begin{equation*}
\lim_{r\to \infty} C_{n-1,n}(r) = \lim_{r\to \infty} r\frac{dw}{dr}(r) +1.
\end{equation*}

We hence assume that $\lim_{r\to \infty} r\frac{dw}{dr}(r) +1= 0$ and consider two subcases: If $V_n(r)$ tends to infinity, then the claim is true (either again by L'H\^{o}pital's rule if also $V_{n-1}(r)$ tends to infinity or otherwise we trivially get $\lim_{r\to\infty} C_{n-1,n}(r)=0$). If $V_n(r)$ is bounded as $r\to \infty$, then \eqref{eq.Vs2} shows that $e^{nw(r)} \to 0$ as $r\to \infty$, which in turn implies that $e^{(n-1)w(r)} \to 0$ and hence again by \eqref{eq.Vs2} $V_{n-1}(r)\to 0$ as $r\to\infty$. Therefore, also in this case, we trivially find $\lim_{r\to\infty} C_{n-1,n}(r)=0 = \lim_{r\to \infty} r\frac{dw}{dr}(r)+1$.\\

We have checked all possible cases and thus finished the proof of Lemma \ref{lemma.isoperimetric} and in view of Corollary \ref{cor.CGBw} also proved Theorem \ref{thm.Rn} for rotationally symmetric metrics.
\end{proof}

In the remainder of this section, we study the case where the origin is a second complete end rather than a finite area singular point. This result will be used in Section \ref{sect.gluing}.

\begin{lemma} \label{lemma.2ends}
Let $n\geq 4$ be an even integer and let $w \in C^\infty (\RR^n\setminus \{0\})$ be rotationally symmetric such that $g = e^{2w}\abs{dx}^2$ has two complete ends (at infinity and at the origin) satisfying \eqref{eq.posscalar} or \eqref{eq.asympw} and with finite total $Q$-curvature $\int_{\RR^n} \abs{Q_{g,n}} dV_g < \infty$.
Then
\begin{equation*}
- \frac{1}{\gamma_n}\int_{\RR^n} Q_{g,n} \, dV_g = \nu_1 + \nu_2,
\end{equation*}
where
\begin{equation*}
\nu_1 := \lim_{r\to \infty} \frac{\vol_g(\dB_r(0))^{n/(n-1)}}{n\, \sigma_n^{1/(n-1)}\,\vol_g(B_r(0)\setminus B_R(0))}, \quad \nu_2 := \lim_{r\to 0} \frac{\vol_g(\dB_r(0))^{n/(n-1)}}{n\, \sigma_n^{1/(n-1)}\,\vol_g(B_R(0)\setminus B_r(0))}
\end{equation*}
for an arbitrary $R>0$.
\end{lemma}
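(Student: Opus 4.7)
The plan is to reduce the statement to the radial derivative identity \eqref{eq.limitdifference} and then convert each one-sided limit of $r\frac{dw}{dr}(r)$ into the corresponding annular isoperimetric ratio by a L'H\^opital argument, closely mirroring Lemma \ref{lemma.isoperimetric}. The hypotheses of Lemma \ref{lemma.wisfa} are met (its proof does not use the finite-area condition at the origin), so $w(x) = f_\alpha(x) + C$ for some $\alpha, C\in\RR$, and Lemma \ref{lemma.fa} gives
\begin{equation*}
\beta - \gamma \,:=\, \lim_{r\to\infty} r\tfrac{dw}{dr}(r) - \lim_{r\to 0} r\tfrac{dw}{dr}(r) = -\frac{1}{\gamma_n}\int_{\RR^n} Q_{g,n}\, dV_g.
\end{equation*}
It therefore suffices to prove $\nu_1 = \beta + 1$ and $\nu_2 = -(\gamma + 1)$.

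For $\nu_1$, substituting the annulus $B_r(0)\setminus B_R(0)$ for the ball $B_r(0)$ only shifts $V_n$ by the finite constant $\vol_g(B_R(0))$. Hence its $r$-derivative is still $\sigma_n r^{n-1}e^{nw(r)}$, exactly as in \eqref{eq.Vderivatives}. The L'H\^opital dichotomy in Lemma \ref{lemma.isoperimetric} thus carries over verbatim (one checks: if $\beta + 1 > 0$ both volumes diverge and L'H\^opital applies, while if $\beta + 1 = 0$ one either still has divergence or the boundary volume vanishes, so the ratio trivially tends to $0$) and yields $\nu_1 = \beta + 1$.

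For $\nu_2$, I would run the symmetric argument at the origin. The denominator $V_n(R,r) = \sigma_n\int_r^R s^{n-1}e^{nw(s)}\,ds$ has $\tfrac{d}{dr}V_n(R,r) = -\sigma_n r^{n-1}e^{nw(r)}$, the sign change coming from $r$ being the lower limit. Completeness at the origin means $\int_0^R e^{w(s)}\,ds = \infty$, which together with the asymptotic $w(r)\sim \gamma\log r$ (a consequence of the finite limit $\gamma=\lim_{r\to 0}r\frac{dw}{dr}(r)$ supplied by Lemma \ref{lemma.fa}) forces $\gamma \leq -1$. When $\gamma < -1$, both $V_{n-1}(r)$ and $V_n(R,r)$ diverge as $r\to 0$ and L'H\^opital's rule applies; when $\gamma = -1$, the numerator $V_{n-1}(r)$ remains bounded while $V_n(R,r)$ still diverges, so the ratio trivially tends to $0 = -(\gamma + 1)$. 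In either case the extra minus sign in $\tfrac{d}{dr}V_n(R,r)$ produces $\nu_2 = -(\gamma + 1)$. Adding the two identities gives $\nu_1 + \nu_2 = \beta - \gamma = -\gamma_n^{-1}\int_{\RR^n} Q_{g,n}\, dV_g$.

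The main technical point is the case analysis for L'H\^opital's rule at the origin, in particular the cusp-like borderline case $\gamma = -1$ where $V_{n-1}$ stays bounded. But this is an exact transcription of the argument at infinity from Lemma \ref{lemma.isoperimetric}, with the roles of ``complete end'' and ``finite-area singular point'' interchanged, and both sides rely ultimately only on the asymptotic $w(r) \sim \gamma \log r$ respectively $w(r)\sim \beta \log r$ coming from Lemma \ref{lemma.fa} together with the completeness hypothesis.
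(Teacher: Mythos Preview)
Your approach is essentially the paper's: invoke Corollary \ref{cor.CGBw} (equivalently Lemmas \ref{lemma.fa}--\ref{lemma.wisfa}) to reduce to the two limits of $r\,w'(r)$, and then identify each with the corresponding annular isoperimetric ratio via L'H\^opital, exactly as in Lemma \ref{lemma.isoperimetric}. The paper handles the end at the origin with the modified volume $\tilde V_n(r)=\sigma_n\int_r^R s^{n-1}e^{nw(s)}\,ds$ and notes $\frac{d}{dr}\tilde V_n(r)=-\sigma_n r^{n-1}e^{nw(r)}$, precisely your $V_n(R,r)$.

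One small inaccuracy: in the borderline case $\gamma=-1$ you assert that $V_{n-1}(r)$ stays bounded while $\tilde V_n(r)$ diverges. Neither is guaranteed in general: take $w(r)=-\log r-\tfrac12\log\abs{\log r}$ (still complete with $\gamma=-1$), for which $V_{n-1}(r)\to 0$ but $\tilde V_n$ is \emph{finite} since $\int_0^R s^{-1}\abs{\log s}^{-n/2}\,ds<\infty$ for $n\geq 4$; conversely $w(r)=-\log r+\tfrac12\log\abs{\log r}$ gives $V_{n-1}(r)\to\infty$. This is also not quite ``an exact transcription'' of Lemma \ref{lemma.isoperimetric}, which splits on whether $V_n$ diverges, not on $V_{n-1}$. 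The correct transcription is: if $\tilde V_n\to\infty$, then either $V_{n-1}\to\infty$ and L'H\^opital gives $0$, or $V_{n-1}$ stays bounded and the ratio is trivially $0$; if $\tilde V_n$ stays bounded, then (as in Lemma \ref{lemma.isoperimetric}) one argues $r^{n-1}e^{nw(r)}\to 0$, hence $re^{w(r)}\to 0$, hence $V_{n-1}(r)\to 0$, and again the ratio is $0$. With this adjustment your argument matches the paper's.
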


This lemma in particular has the consequence that no such metric $g$ can have positive $Q$-curvature everywhere. The proof is almost identical to the above, and we therefore only give a short sketch.

\begin{proof}
By Corollary \ref{cor.CGBw}, subtracting $\chi(\RR^n)=1$ on both sides, we have
\begin{equation*}
- \frac{1}{\gamma_n}\int_{\RR^n} Q_{g,n} \, dV_g = \nu - (\mu+1),
\end{equation*}
where
\begin{equation*}
\nu :=\lim_{r\to\infty}r\frac{dw}{dr}(r)+1,\qquad \mu+1 :=\lim_{r\to 0}r\frac{dw}{dr}(r) +1.
\end{equation*}
Following the proof of Lemma \ref{lemma.isoperimetric} for the complete end at infinity, we obtain $\nu = \nu_1$. For the end at the origin, we define
\begin{equation*}
\tilde{V}_n(r) :=\int_{B_R(0)\setminus B_r(0)} e^{nw}\, dx = \int_r^R \int_{\dB_s(0)} e^{nw(s)} d\sigma ds = \sigma_n \int_r^R s^{n-1}e^{nw(s)} ds
\end{equation*}
which satisfies
\begin{equation*}
\frac{d}{dr} \tilde{V}_n(r) = - \sigma_n r^{n-1}e^{nw(r)}.
\end{equation*}
Therefore, denoting
\begin{equation*}
\tilde{C}_{n-1,n}(r) = \frac{V_{n-1}(r)^{n/(n-1)}}{\omega_n^{1/(n-1)} \tilde{V}_n(r)} = \frac{\vol_g(\dB_r(0))^{n/(n-1)}}{n\, \sigma_n^{1/(n-1)}\,\vol_g(B_R(0)\setminus B_r(0))},
\end{equation*}
a computation using L'H\^{o}pital's rule as in Lemma \ref{lemma.isoperimetric} implies that
\begin{equation*}
\nu_2 = \lim_{r\to 0} \tilde{C}_{n-1,n}(r) = -\Big(\lim_{r\to 0} r\frac{dw}{dr}(r) +1\Big) = -(\mu + 1).
\end{equation*}
This finishes the proof of Lemma \ref{lemma.2ends}.
\end{proof}


\section{Generalised Normal Metrics}\label{sect.normal}
In this section, we first define generalised normal metrics on $\RR^n$ as an extension of our definition of generalised normal metrics in $\RR^4$ in \cite{BuzaNg1}. This in turn was a generalisation of normal metrics in \cite{CQY1} and \cite{F65}. We then prove Theorem \ref{thm.Rn} for this class of metrics.

\begin{defn}[Generalised normal metrics]\label{defn.normal}
Suppose that $g=e^{2w}\abs{dx}^2$ is a metric on $\RR^n \setminus \{0\} $ with finite total Paneitz $Q$-curvature 
\begin{equation*}
\int_{\RR^n} \abs{Q_{g,n}} e^{nw} dx < \infty. 
\end{equation*}   
We call $g$ a \emph{generalised normal metric}, if $w$ has the expansion
\begin{equation}\label{eq.defnormal}
w(x)=\frac{1}{\gamma_n} \int_{\RR^n}\log\Big(\frac{\abs{y}}{\abs{x-y}}\Big)\, Q_{g,n}(y)\,e^{nw(y)}\,dy+\alpha\log\abs{x}+C
\end{equation}
for some constants $\alpha,C\in\RR$. For such a generalised normal metric, we then define the \emph{averaged metric} $\bar{g}=e^{2\bar{w}}\abs{dx}^2$ by
\begin{equation}
\bar{w}(r):=\dashint_{\dB_r(0)}w(x)\,d\sigma(x) = \frac{1}{\abs{\dB_r(0)}}\int_{\dB_r(0)}w(x)\,d\sigma(x).
\end{equation}
Clearly, $\bar{g}$ is a rotationally symmetric metric.
\end{defn}

\begin{prop} \label{prop.1}
Suppose that the metric $g= e ^{2w} \abs{dx}^2 $ on $ \RR^n \setminus\{0 \}$ is a generalised normal metric. Then for all $k > 0$, we have that 
\begin{align}\label{eq.lemma1}
\dashint _{ \dB_r(0) } e^ {kw(x)} d \sigma(x) = e ^{ k \bar w ( r ) } e ^{o(1)}
\end{align}  
where $o(1)\to 0$ as $ r\to \infty $ or $ r \to 0$.
\end{prop}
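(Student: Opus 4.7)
The plan is to prove the equivalent formulation $\dashint_{\partial B_r(0)} e^{k(w(x) - \bar w(r))}\, d\sigma(x) \to 1$ as $r \to 0$ or $r \to \infty$, from which \eqref{eq.lemma1} follows by multiplying through by $e^{k\bar w(r)}$. Jensen's inequality for the convex function $e^{kt}$ immediately yields the lower bound $\dashint_{\partial B_r(0)} e^{kw}\, d\sigma \geq e^{k\bar w(r)}$, so the entire difficulty lies in the matching upper bound. Using the expansion \eqref{eq.defnormal} and noting that the terms $\alpha \log \abs{x} + C$ are constant on $\partial B_r(0)$, one obtains the clean representation
$$w(x) - \bar w(r) = -\tfrac{1}{\gamma_n}\int_{\RR^n} F_r(x, y)\, Q_{g,n}(y)\, e^{nw(y)}\, dy,$$
where $F_r(x, y) := \log\abs{x-y} - \dashint_{\partial B_r(0)}\log\abs{z-y}\, d\sigma(z)$ has $\sigma$-mean zero on $\partial B_r(0)$ for every fixed $y$.

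For small $\eta > 0$ I would split the $y$-integration into a \emph{far} region $\{\abs{y}/r \notin (\eta, 1/\eta)\}$, a \emph{mid} region $\{\abs{y}/r \in (\eta, 1/2) \cup (3/2, 1/\eta)\}$, and a \emph{near} region $\{\abs{y}/r \in [1/2, 3/2]\}$, writing the corresponding contributions to $w(x) - \bar w(r)$ as $\mathcal{I}(x), \mathcal{II}(x), \mathcal{III}(x)$. On the far region, the elementary oscillation bound $\log\tfrac{r+\abs{y}}{\abs{r-\abs{y}}} = O(\eta)$ applied to $z \mapsto \log\abs{z - y}$ gives $\sup_{\partial B_r(0)}\abs{\mathcal{I}} \leq C\eta \int_{\RR^n}\abs{Q_{g,n}}\, e^{nw}\,dy$. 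On the mid region, $\abs{F_r}$ is pointwise bounded by a constant $C_\eta$, while the $\abs{Q_{g,n}}e^{nw}$-measure of this shifting annulus tends to zero as $r \to 0$ or $r \to \infty$ by absolute continuity of the finite measure $\abs{Q_{g,n}} e^{nw}\,dy$ (the annulus either collapses to the origin or recedes to infinity); hence $\sup_{\partial B_r(0)}\abs{\mathcal{II}} = o_\eta(1)$. Combined, $e^{k(\mathcal I + \mathcal{II})} = 1 + O(\eta) + o_\eta(1)$ uniformly, and the problem reduces to showing $\dashint_{\partial B_r(0)} e^{k\mathcal{III}}\, d\sigma \to 1$.

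The main obstacle is the near region: here only the integrated estimate $\dashint_{\partial B_r(0)}\abs{F_r(x, y)}\,d\sigma(x) \leq C$ from Proposition \ref{prop.intest}(iv) is available, and $F_r$ is not pointwise bounded, because of the $-\log\abs{x-y}$ singularity when $y$ approaches $\partial B_r(0)$. The plan is to exploit that $\dashint \mathcal{III}\,d\sigma = 0$ (by Fubini and the mean-zero property of $F_r$) and use the second-order Taylor expansion $e^{k\mathcal{III}} - 1 - k\mathcal{III} = \tfrac{1}{2}(k\mathcal{III})^2 e^{\theta k \mathcal{III}}$ with $\abs{\theta}\leq 1$, combined with Cauchy-Schwarz, to obtain
$$\dashint(e^{k\mathcal{III}} - 1)\, d\sigma \leq \tfrac{k^2}{2}\Big(\dashint \mathcal{III}^4\,d\sigma\Big)^{1/2}\Big(\dashint e^{2k\abs{\mathcal{III}}}\,d\sigma\Big)^{1/2}.$$
To estimate $\big(\dashint\abs{\mathcal{III}}^p\,d\sigma\big)^{1/p}$ for arbitrary $p < \infty$, I would establish the natural $L^p$-analog of Proposition \ref{prop.intest}(iv) by running the same splitting argument of its proof with $\abs{\log}^p$ in place of $\abs{\log}$ (still locally integrable), and then apply Minkowski's integral inequality to get $\big(\dashint \abs{\mathcal{III}}^p\,d\sigma\big)^{1/p} \leq C_p \int_{\text{near}}\abs{Q_{g,n}}e^{nw}\,dy \to 0$. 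The exponential moment is then controlled by summing the Taylor series for $e^{2k\abs{\mathcal{III}}}$ and using that $C_p$ grows only polynomially in $p$, so that it is uniformly bounded (say for $k$ in any fixed compact range). Sending $r \to 0$ (or $r \to \infty$) and then $\eta \to 0$ concludes the proof.
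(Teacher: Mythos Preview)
Your argument is essentially correct and arrives at the same conclusion, but the treatment of the near region is genuinely different from the paper's. The paper handles $w_3$ (your $\mathcal{III}$) by Finn's distribution-function method: one bounds the level sets $E_M=\{\sigma:|w_3(r\sigma)|>M\}$ via $M|E_M|\le \int_{E_M}|w_3|\,d\sigma$ and shows $|E_M|\le C\exp(-M/o(1))$, from which $\dashint(e^{kw_3}-1)\,d\sigma=o(1)$ follows by the layer-cake formula. Your route instead establishes $L^p$ bounds $\|\mathcal{III}\|_{L^p}\le C_p\,\eps(r)$ via Minkowski and then sums the exponential series. Both are standard, equivalent ways of proving exponential integrability of a logarithmic potential against a small measure; Finn's method is a bit cleaner (no series to sum), while your moment approach is more explicitly quantitative.

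One point should be tightened: you write that ``$C_p$ grows only polynomially in $p$'', but polynomial growth in general is \emph{not} enough for $\sum_p (2kC_p\eps)^p/p!$ to converge. What you actually get from the computation in Proposition~\ref{prop.intest}(iv), replacing $|\log|$ by $|\log|^p$, is
\[
\big(\dashint_{\partial B_r(0)}|F_r(x,y)|^p\,d\sigma(x)\big)^{1/p}\;\le\; C+\Big(\tfrac{\Gamma(p+1)}{(n-1)^{p+1}}\Big)^{1/p}\;\lesssim\; C + \tfrac{p}{e(n-1)},
\]
i.e.\ \emph{linear} growth $C_p=O(p)$. With this, Stirling gives $(2kC_p\eps)^p/p!\lesssim (2kCe\eps)^p$, which is summable once $\eps(r)$ is below a threshold depending only on $k$; since $\eps(r)\to 0$, this eventually holds. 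So the argument works, but you should state and use the sharp $C_p=O(p)$ bound rather than a generic polynomial one. Apart from this, your far/mid/near splitting (with the auxiliary parameter $\eta$) mirrors the paper's treatment of $w_1,w_2$ closely, and the reduction via the uniform smallness of $\mathcal{I}+\mathcal{II}$ is unproblematic.
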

\begin{proof}
The proof of this statement is merely a modification of our four-dimensional version from \cite[Lemma 3.4]{BuzaNg1}. The proof for $r\to\infty$ was essentially covered in \cite[Lemma 3.2]{CQY1} in the four-dimensional case and later generalised in \cite[Prop. 3.1 (ii)]{NX10} to higher dimensions. Note that in \cite{CQY1,NX10} the formula \eqref{eq.lemma1} is proved for \emph{normal metrics} which differ from our definition of \emph{generalised} normal metrics by our additional term $\alpha\log\abs{x}$ in \eqref{eq.defnormal}. But this additional term, the fundamental solution of the $\frac{n}{2}$-Laplacian, is rotationally symmetric and thus in equation \eqref{eq.lemma1}, $e^{\alpha \log\abs{x}}$ appears on both sides and hence cancels. For this reason, we only need to prove the proposition for $r\to 0$.\\

Suppose that $g=e^{2w}\abs{dx}^2$ is a generalised normal metric. To simplify notation, we denote $F(y)=Q_g(y)e^{nw(y)}$, which by assumption is in $L^1$. Then, splitting up $\RR^n$ into three regions, we have
\begin{align*}
w(x) &= \frac{1}{\gamma_n} \int_{B_{\abs{x}/2}(0)} \log \left( \frac{\abs{x}}{\abs{x-y}}\right)F(y) dy + f(\abs{x})\\
&\quad + \frac{1}{\gamma_n} \int_{\RR^n \setminus B_{3\abs{x}/2}(0)} \log \left( \frac{\abs{y}}{\abs{x-y}}\right)F(y) dy\\
&\quad +\frac{1}{\gamma_n} \int_{B_{3\abs{x}/2}(0)\setminus B_{\abs{x}/2}(0)} \log \left( \frac{\abs{y}}{\abs{x-y}} \right)F(y) dy + \alpha \log\abs{x} + C \\
& = w_1(x) + w_2(x) + w_3(x) + f(\abs{x}) + \alpha \log\abs{x} + C,
\end{align*}
where
\begin{equation*}
f(\abs{x}) =\frac{1}{\gamma_n} \int_{B_{\abs{x}/2}(0)} \log \left( \frac{\abs{y}}{\abs{x}}\right)F(y) dy.
\end{equation*}
We note that $f(\abs{x}) +\alpha\log\abs{x}+C$ is rotationally symmetric and hence in equation \eqref{eq.lemma1}, $\exp(f(\abs{x})+\alpha \log\abs{x}+C)$ appears on both sides and thus cancels. Therefore, we need to study only $w_1(x)$, $w_2(x)$ and $w_3(x)$. We first claim the following.
\begin{claim}\label{claim1}
We have
\begin{equation}\label{eq.proofstep1}
w_1(x) = o(1), \qquad \text{as $\abs{x}\to 0$}
\end{equation} 
and
\begin{equation}\label{eq.proofstep2}
w_2(x) = o(1), \qquad \text{as $\abs{x}\to 0$}.
\end{equation} 
\end{claim}
\begin{proof}
In order to prove \eqref{eq.proofstep1}, let $\eta<\frac{1}{2}$, and estimate 
\begin{equation}\label{eq.absw1}
\abs{w_1(x)} \leq C \left( \int_{\abs{y}\leq \eta\abs{x}} \left| \log \frac{\abs{x}}{\abs{x-y}}\right| \abs{F(y)} dy + \int_{\eta\abs{x}\leq\abs{y}\leq \frac{1}{2} \abs{x}} \left| \log \frac {\abs{x}}{\abs{x-y}}\right| \abs{F(y)} dy\right).
\end{equation}
Note that $\abs{y}\leq\eta\abs{x}$ implies
\begin{equation*}
(1-\eta)\abs{x}\leq \abs{x}-\abs{y}\leq\abs{x-y}\leq\abs{x}+\abs{y}\leq (1+\eta)\abs{x},
\end{equation*}
and therefore
\begin{equation*}
\left| \log\frac{\abs{x}}{\abs{x-y}}\right| \leq \max \left\{ \left| \log \frac{1}{1+\eta} \right|, \left| \log \frac{1}{1-\eta}\right| \right\} = \log \frac{1}{1-\eta}.
\end{equation*}
We use this to estimate the first integral in \eqref{eq.absw1}. For the second integral, we then use the bound $\abs{y}\leq\frac{1}{2}\abs{x}$, which by an analogous argument as above yields
\begin{equation*}
\left| \log\frac{\abs{x}}{\abs{x-y}}\right| \leq \left |\log \frac{1}{1-\frac{1}{2}} \right| =\log 2.
\end{equation*}
Combining the two estimates and using $\int_{\RR^n}\abs{F(y)}dy < \infty$, we obtain
\begin{align*}
\abs{w_1(x)} \leq C \log \frac{1}{1-\eta} + \log 2 \int_{\eta \abs{x} \leq \abs{y} \leq \frac{1}{2}\abs{x}} \abs{F(y)}dy.
\end{align*}
For $\abs{x}\to 0$ and $\eta\to 0$, both terms above tend to zero, using again $\int_{\RR^n}\abs{F(y)}dy < \infty$. This proves \eqref{eq.proofstep1}.\\

The argument to prove \eqref{eq.proofstep2} is dual to what we have just done. For $\eta>\frac{3}{2}$, we write 
\begin{equation}\label{eq.absw2}
\abs{w_2(x)} \leq C \left( \int_{\abs{y}\geq \eta\abs{x}} \left| \log \frac{\abs{y}}{\abs{x-y}}\right| \abs{F(y)} dy + \int_{\eta\abs{x}\geq\abs{y}\geq \frac{3}{2} \abs{x}} \left| \log \frac {\abs{y}}{\abs{x-y}}\right| \abs{F(y)} dy\right).
\end{equation}
Then, we note that $\abs{y}\geq \eta\abs{x}$ yields
\begin{equation*}
(1-\tfrac{1}{\eta})\abs{y}\leq \abs{y}-\abs{x}\leq\abs{x-y}\leq\abs{y}+\abs{x}\leq (1+\tfrac{1}{\eta})\abs{y},
\end{equation*}
which gives
\begin{equation*}
\left| \log\frac{\abs{y}}{\abs{x-y}}\right| \leq \max \left\{ \left| \log \frac{1}{1+\frac{1}{\eta}} \right|, \left| \log \frac{1}{1-\frac{1}{\eta}}\right| \right\} = \log \frac{1}{1-\frac{1}{\eta}}.
\end{equation*}
This can be used to estimate the first integral in \eqref{eq.absw2}. Similarly, we estimate the second integral, using the bound $\abs{y}\geq \frac{3}{2}\abs{x}$, which gives
\begin{equation*}
\left| \log\frac{\abs{y}}{\abs{x-y}}\right| \leq \log \frac{1}{1-\frac{2}{3}} = \log 3.
\end{equation*}
Combining these estimates and using $\int_{\RR^n}\abs{F(y)}dy < \infty$, we have
\begin{align*}
\abs{w_2(x)} \leq C \log \frac{\eta}{\eta-1} + \log 3 \int_{\eta \abs{x} \geq \abs{y} \geq \frac{3}{2}\abs{x}} \abs{F(y)}dy.
\end{align*}
For $\abs{x}\to 0$, we can send $\eta\to\infty$ slow enough such that $\eta\abs{x}\to 0$, in which case both terms above tend to zero, using again $\int_{\RR^n}\abs{F(y)}dy < \infty$. This yields \eqref{eq.proofstep2} and thus finishes the proof of Claim \ref{claim1}.
\end{proof}

\begin{claim}\label{claim2}
We have
\begin{equation}\label{eq.proofstep3a}
\dashint _{ \dB_r(0)} w_3 (x) d \sigma(x) = o(1), \quad \text{ as $ r\to 0$},
\end{equation}
as well as
\begin{equation}\label{eq.proofstep4}
\left| \dashint_{\dB_r(0)} (e^{kw_3(x)} -1) d\sigma(x) \right| = o(1), \quad \text{ as $ r\to 0$, for all $k>0$}.
\end{equation}
\end{claim}
\begin{proof}
By Fubini's theorem 
\begin{align*}
\bar{w}_3(r)&=\dashint_{\dB_r(0)} w_3(x) d\sigma(x)\\
&= \frac{1}{\gamma_n} \int_{\frac{r}{2}\leq \abs{y} \leq \frac{3r}{2}} \left( \dashint_{\dB_r(0)} \log \frac{\abs{y}}{\abs{x -y}} d\sigma(x) \right)F(y) dy\\
&=\frac{1}{\gamma_n} \int_{\frac{r}{2}\leq  \abs{y} \leq \frac{3r}{2}} L_n(r,s) F(y) dy,
\end{align*}
where $L_n(r,s)$ is as in \eqref{eq.defL}. By \eqref{eq.estL}, we know that $L_n(r,s)$ is uniformly bounded over the region where we integrate. Therefore, using the assumption of finite total $Q$-curvature and the dominated convergence theorem, \eqref{eq.proofstep3a} follows.\\

To prove \eqref{eq.proofstep4}, we follow the idea of Finn \cite{F65} and estimate 
\begin{equation*}
E_M = \{ \sigma \in \Sph^{n-1} : \abs{w_3(r \sigma)} > M \}.
\end{equation*} 
Similar to the above, we have 
\begin{align*}
M \cdot \abs{E_M} &\leq \int_{E_M} \abs{w_3} \, d\sigma\\
&\leq \frac{1}{\gamma_n} \int_{B_{3r/2} (0)\setminus B_{r/2} (0)}
\left( \int _{ E_M} \left| \log \frac {\abs{y}} {\abs{r \sigma - y}} \right| d\sigma \right) \abs{F(y)}d y  \\
 &= \frac{1}{\gamma_n } \int_{\frac {r}{2} \leq \abs{y} \leq \frac {3r}{2}} \tilde{L}_n(r,s) \abs{F(y)} dy,
\end{align*} 
where
\begin{align*}
\tilde{L}_n(r,s) &= \int_{E_M \setminus \left \{\sigma\, :\, \abs{r \sigma - y} \leq \frac{\abs{y}}{3} \right\}} \left| \log \frac {\abs{y}}{\abs{r \sigma - y}}  \right| d\sigma + \int _{ E_M \cap\left \{\sigma\, :\, \abs{r \sigma - y} \leq \frac{\abs{y}}{3} \right \}}  \left| \log \frac {\abs{y}}{\abs{r \sigma - y}}  \right| d\sigma\\
 & =\tilde{L}_n^1(r,s) + \tilde{L}_n^2(r,s). 
\end{align*}
Similar to the estimate of $L_n(r,s)$ in Proposition \ref{prop.intest}, we clearly have $\tilde{L}_n^1(r,s) \leq \log 3 \cdot \abs{E_M}$. We estimate the term $\tilde{L}_n^2(r,s)$ as follows. Observe that if we have 
$\abs{r \sigma - y} \leq \frac {\abs{y}}{ 3}$ then 
\begin{align*}
\log \bigg| \frac {\abs{y}}{\abs{r \sigma -y}} \bigg| \leq  \bigg| \log \frac{\abs{y}}{r} \bigg| + \bigg| \log \Big| \sigma - \frac{y}{r} \Big| \bigg| \leq  \log \frac 3 2 + \bigg| \log \Big| \sigma - \frac{y}{r} \Big| \bigg|.
\end{align*} 
We can thus bound $\tilde{L}_n^2(r,s)$ by the situation where $E_M$ is a $n$-dimensional disc centred at the point $\frac{y}{r}$ orthogonal to $y$, in which case we get
\begin{align*}
\tilde{L}_n^2(r,s) \leq C \abs{E_M} + C \abs{E_M} \log \frac {1}{\abs{E_M}} \leq C \left ( 1 + \log \frac {1}{\abs{E_M}} \right)\abs{E_M}.
\end{align*}
Combining these estimates, we have
\begin{align*}
 M \leq o(1) \left ( 1 + \log \frac{1}{\abs{E_M}} \right),
\end{align*}
where $ o(1) \to 0$ as $ r \to 0$. This implies
\begin{equation*}
\abs{E_M} \leq C e^ {-M / o(1)},
\end{equation*}
and thus
\begin{equation*}
\left| \dashint_{\dB_r(0)} (e^{kw_3(x)} -1) d\sigma(x) \right| = \frac {k}{\abs{\dB_ 1(0)}} \int_{-\infty}^{+\infty} (e^{kM} -1) \abs{E_M} dM = o(1),
\end{equation*}
which finishes the proof of Claim \ref{claim2}.
\end{proof}

Using the two claims, it is easy to finish the proof of Proposition \ref{prop.1}. Combining \eqref{eq.proofstep1}, \eqref{eq.proofstep2} and \eqref{eq.proofstep3a}, as well as Jensen's inequality, we obtain for $r\to 0$
\begin{equation}\label{eq.proofstep3b}
\begin{aligned}
k\, \dashint _{\dB_r (0)} w(x)d \sigma(x) &= \dashint _{\dB_r (0)} k \big(w(x) - w_3(x) \big) d \sigma(x)+ o(1)\\
&= \log \left( \dashint _{\dB_r(0)}e^{k(w(x) - w_3(x))} d\sigma(x)\right) + o(1)
\end{aligned}
\end{equation}  
as $r \to 0$. Combining this with \eqref{eq.proofstep4}, we find 
\begin{align*}
k\bar{w}(r)= k\; \dashint_{\dB_r(0)} w(x) d \sigma(x) = \log \left(\dashint_{\dB_ r(0)}   e^{kw(x)} d \sigma(x)\right) + o(1),
\end{align*}
which is equivalent to \eqref{eq.lemma1}, thus finishing the proof of Proposition \ref{prop.1}
\end{proof}

The Proposition \ref{prop.1} then immediately implies the following two corollaries.

\begin{cor}\label{cor.volumes}
Let $ g$ be a generalised normal metric on $ \RR ^ n \setminus \{ 0 \} $ with averaged metric $ \bar g$ and define the mixed volumes $ V_k$  (with respect to $ g$) and $ \bar V _k $ (with respect to $\bar g$) as in Definition \ref{defn.ratios}. Then we have
\begin{align}
\frac { d }{ dr } V _ n (r ) &= \frac{ d }{ dr} \bar V_n(r) ( 1 + \eps ( r) ), \label{eq.thm3.V4}\\ 
V_{ n-1 } ( r) & = \bar V_{n-1} ( r) ( 1 + \eps (r) ),\label{eq.thm3.V3} 
\end{align}
where $\eps  (r) \to 0$ as $ r \to 0$ or $ r \to \infty$.
\end{cor}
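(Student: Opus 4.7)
The plan is to reduce both statements directly to Proposition \ref{prop.1}. The key observation is that each quantity appearing on the left- and right-hand sides of \eqref{eq.thm3.V4}--\eqref{eq.thm3.V3} can be rewritten in terms of a spherical average of the form $\dashint_{\dB_r(0)} e^{kw(x)} d\sigma(x)$ for either $k=n$ or $k=n-1$. Proposition \ref{prop.1} then provides an asymptotic multiplicative factor of $e^{o(1)}$, which becomes the desired $1+\eps(r)$ with $\eps(r)\to 0$. No additional estimates beyond Proposition \ref{prop.1} are required, so the main ``difficulty'' is simply bookkeeping.

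For \eqref{eq.thm3.V3}, I would write
\begin{equation*}
V_{n-1}(r) = \frac{\abs{\dB_r(0)}}{n} \dashint_{\dB_r(0)} e^{(n-1)w(x)}\, d\sigma(x) = \frac{\sigma_n r^{n-1}}{n} \dashint_{\dB_r(0)} e^{(n-1)w(x)}\, d\sigma(x).
\end{equation*}
Since $\bar g = e^{2\bar w}\abs{dx}^2$ is rotationally symmetric, formula \eqref{eq.Vs2} applied to $\bar g$ gives $\bar V_{n-1}(r) = \tfrac{\sigma_n}{n} r^{n-1} e^{(n-1)\bar w(r)}$. Applying Proposition \ref{prop.1} with $k=n-1$ then yields $V_{n-1}(r) = \bar V_{n-1}(r)\, e^{o(1)}$ as $r\to 0$ or $r\to\infty$, so \eqref{eq.thm3.V3} follows by setting $\eps(r) := e^{o(1)} - 1$.

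For \eqref{eq.thm3.V4}, I would differentiate $V_n(r)$ in spherical coordinates to obtain
\begin{equation*}
\frac{d}{dr} V_n(r) = \int_{\dB_r(0)} e^{nw(x)}\, d\sigma(x) = \sigma_n r^{n-1} \dashint_{\dB_r(0)} e^{nw(x)}\, d\sigma(x),
\end{equation*}
while \eqref{eq.Vs2}--\eqref{eq.Vderivatives} applied to the rotationally symmetric $\bar g$ give $\tfrac{d}{dr}\bar V_n(r) = \sigma_n r^{n-1} e^{n\bar w(r)}$. Applying Proposition \ref{prop.1} with $k=n$ then produces $\tfrac{d}{dr} V_n(r) = \tfrac{d}{dr}\bar V_n(r)\cdot e^{o(1)}$, which again takes the required form. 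The only subtlety, which is entirely handled inside Proposition \ref{prop.1} and thus not an obstacle here, is that $o(1)$ may depend on whether we take $r\to 0$ or $r\to\infty$, but in either regime it tends to zero, so the single function $\eps(r) := e^{o(1)}-1$ fulfills the statement.
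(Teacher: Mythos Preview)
Your proof is correct and is exactly what the paper has in mind: the paper states that Corollary~\ref{cor.volumes} is an immediate consequence of Proposition~\ref{prop.1} and gives no further details, and your argument simply spells out the two direct applications of Proposition~\ref{prop.1} with $k=n$ and $k=n-1$ together with the rotationally symmetric formulas \eqref{eq.Vs2}--\eqref{eq.Vderivatives}.
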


\begin{cor}\label{cor.finitearea}
Suppose that the metric $g = e^{ 2w }\abs{d x}^2$ on $ \RR^n \setminus \{0\} $ is a generalised normal metric that is complete and has a finite area singularity at $ \{ 0 \} $ then $\bar{g}=e^{2 \bar{w}} \abs{dx}^2$ is complete and has a finite area singularity over $ \{ 0 \} $.
\end{cor}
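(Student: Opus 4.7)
This corollary asserts that $\bar g$ inherits the two boundary behaviours of $g$, and my plan is to read off both directly from Proposition~\ref{prop.1} and Corollary~\ref{cor.volumes}, which encode the fact that the averaging $w\mapsto\bar w$ is (multiplicatively) almost invisible to the relevant geometric quantities both at the origin and at infinity. For finite area, I would invoke Corollary~\ref{cor.volumes} directly: given $\delta>0$, pick $r_\delta>0$ with $|\varepsilon(s)|<\delta$ on $(0,r_\delta)$, integrate the identity $\frac{d}{ds}V_n(s)=\frac{d}{ds}\bar V_n(s)(1+\varepsilon(s))$ from $0$ to $r<r_\delta$, and conclude the sandwich $(1-\delta)\bar V_n(r)\leq V_n(r)\leq (1+\delta)\bar V_n(r)$. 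The hypothesis $V_n(r)\to 0$ then forces $\bar V_n(r)\to 0$, which is exactly finite area for $\bar g$.

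For completeness at infinity, my plan is to bound the $g$-distance between concentric Euclidean spheres above by a radial $\bar w$-integral, then use $g$-completeness to force divergence. Fix $r_0>0$. The radial segment $s\mapsto s\sigma$ has $g$-length $\int_{r_0}^r e^{w(s\sigma)}\,ds$, so $d_g(\dB_{r_0}(0),\dB_r(0))$ is dominated by the infimum over $\sigma\in\Sph^{n-1}$ and hence by the $\Sph^{n-1}$-average; applying Proposition~\ref{prop.1} with $k=1$ yields
\begin{equation*}
d_g(\dB_{r_0}(0),\dB_r(0))\leq \int_{r_0}^r\dashint_{\dB_s(0)} e^{w}\,d\sigma\,ds = \int_{r_0}^r e^{\bar w(s)}\bigl(1+o(1)\bigr)\,ds,
\end{equation*}
with $o(1)\to 0$ as $s\to\infty$. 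Completeness of $g$ at infinity gives $d_g(x_0,\dB_r(0))\to\infty$ for any fixed $x_0\in\dB_{r_0}(0)$, and since this smooth compact sphere has finite $g$-diameter, also $d_g(\dB_{r_0}(0),\dB_r(0))\geq d_g(x_0,\dB_r(0))-\mathrm{diam}_g(\dB_{r_0}(0))\to\infty$ as $r\to\infty$. Combining with the upper bound forces $\int_{r_0}^\infty e^{\bar w(s)}\,ds=\infty$, which is exactly completeness at infinity for the rotationally symmetric $\bar g$.

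The only subtlety worth flagging is passing the fibrewise $o(1)$ factor from Proposition~\ref{prop.1} through the radial integral; I would handle this in the standard way by splitting the integration at some $R_\delta$ beyond which $|o(1)|<\delta$ and absorbing the finite piece on $[r_0,R_\delta]$ into a harmless additive constant. No deeper obstacle is anticipated, since all the analytic heavy lifting has already been performed in Proposition~\ref{prop.1} and Corollary~\ref{cor.volumes}.
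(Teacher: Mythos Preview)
Your proposal is correct and is precisely the kind of argument the paper has in mind: the paper gives no proof at all, stating only that Corollary~\ref{cor.finitearea} follows ``immediately'' from Proposition~\ref{prop.1}, and your derivation supplies the natural details from Proposition~\ref{prop.1} and Corollary~\ref{cor.volumes}. One minor simplification: for completeness at infinity you can bypass the distance $d_g(\partial B_{r_0},\partial B_r)$ entirely --- if $\int_{r_0}^\infty e^{\bar w(s)}\,ds<\infty$ then by Proposition~\ref{prop.1} and Fubini $\dashint_{\Sph^{n-1}}\int_{r_0}^\infty e^{w(s\sigma)}\,ds\,d\sigma<\infty$, so some radial ray has finite $g$-length to infinity, contradicting completeness of $g$ directly; this avoids having to justify the step $d_g(x_0,\partial B_r)\to\infty$.
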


In order to conclude that Theorem \ref{thm.Rn} holds for $\bar w$, we need to either show the geometric property that $\bar g$ has positive scalar curvature at infinity and the origin, as in \eqref{eq.posscalar}, or alternatively verify the analytical assumption \eqref{eq.asympw}. These latter bounds are easy to verify for the averaged conformal factor of a generalised normal metric. In fact, we prove a slightly more general result here which we can then also use in the next section.

\begin{lemma}\label{lemma.growth}
Suppose that $g=e^{2w}\abs{dx}^2$ is a metric on $\RR^n \setminus \{0\} $ with finite total $Q$-curvature 
\begin{equation*}
\int_{\RR^n} \abs{Q_{g,n}} e^{nw} dx < \infty. 
\end{equation*}   
Define $v(x)$ by
\begin{equation}\label{eq.defvw}
v(x)=\frac{1}{\gamma_n} \int_{\RR^n}\log\Big(\frac{\abs{y}}{\abs{x-y}}\Big)\, Q_{g,n}(y)\,e^{nw(y)}\,dy+\alpha\log\abs{x}+C
\end{equation}
for some constants $\alpha,C\in\RR$ and set
\begin{equation}\label{eq.defbarv}
\bar{v}(r):=\dashint_{\dB_r(0)}v(x)\,d\sigma(x).
\end{equation}
Then $\bar v$ satisfies the following bounds 
\begin{equation}\label{eq.asympv}
\abs{\Lap \bar v(r)} \leq \frac {C}{r^2}, \qquad \abs{\nabla \bar{v}(r)} \leq \frac{C}{r},
\end{equation}
for some constant $C \in \RR$.
\end{lemma}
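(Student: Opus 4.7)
The plan is to essentially repeat the calculations we did for the rotationally symmetric case in Lemma \ref{lemma.fa}, but here with the twist that $v$ is no longer radial, so the averaging has to be carried out \emph{after} differentiating. The main analytic input will again be the averaged integral estimates $J_n$ and $K_n$ from Proposition \ref{prop.intest}, together with the assumption of finite total $Q$-curvature.

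First I would split $v = v_1 + \alpha \log|x| + C$ with
\begin{equation*}
v_1(x) = \frac{1}{\gamma_n}\int_{\RR^n}\log\frac{|y|}{|x-y|}\,Q_{g,n}(y)\,e^{nw(y)}\,dy,
\end{equation*}
and use that $\Delta_x \log|x-y| = (n-2)/|x-y|^2$ and $\Delta \log|x| = (n-2)/r^2$ to obtain
\begin{equation*}
\Lap v(x) = -\frac{n-2}{\gamma_n}\int_{\RR^n}\frac{Q_{g,n}(y)\,e^{nw(y)}}{|x-y|^2}\,dy + \frac{\alpha(n-2)}{r^2}.
\end{equation*}
Since $\bar v$ is radial and the usual identity $\bar v''(r)+\tfrac{n-1}{r}\bar v'(r) = \dashint_{\dB_r(0)}\Lap v \, d\sigma$ (obtained from the divergence theorem applied to $\bar v'(r)= |\dB_r|^{-1}\int_{B_r}\Lap v\, dx$) shows that $\Lap \bar v(r)=\overline{\Lap v}(r)$, averaging the formula above and interchanging the order of integration by Fubini gives
\begin{equation*}
\Lap \bar v(r) = -\frac{n-2}{\gamma_n}\int_{\RR^n} J_n(r,|y|)\,Q_{g,n}(y)\,e^{nw(y)}\,dy + \frac{\alpha(n-2)}{r^2}.
\end{equation*}
Applying the bound $r^2 J_n(r,s)\leq C$ from \eqref{eq.estJ} together with the finite total $Q$-curvature assumption yields $|\Lap \bar v(r)|\leq C/r^2$, as desired.

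For the gradient, I would compute $\partial_\nu v$ on $\dB_r(0)$ directly. Using $\nabla_x \log|x-y|^{-1} = -(x-y)/|x-y|^2$ and the identity $2\langle x, x-y\rangle = |x-y|^2 + |x|^2 - |y|^2$, exactly as in Lemma \ref{lemma.fa}, one obtains
\begin{equation*}
r\,\partial_\nu v(x) = -\frac{1}{2\gamma_n}\int_{\RR^n}\Bigl(1 + \frac{|x|^2-|y|^2}{|x-y|^2}\Bigr) Q_{g,n}(y)\,e^{nw(y)}\,dy + \alpha.
\end{equation*}
Since $\bar v'(r) = \dashint_{\dB_r(0)} \partial_\nu v \, d\sigma$, averaging this expression over $\dB_r(0)$ and invoking the estimate $K_n(r,s)\leq C$ from \eqref{eq.estK}, combined once more with $\int_{\RR^n}|Q_{g,n}|e^{nw}\,dx<\infty$, gives $|r\,\bar v'(r)|\leq C$. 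Because $\bar v$ is radial, $|\nabla \bar v(r)| = |\bar v'(r)|\leq C/r$, completing the proof.

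The only genuinely delicate point is that the pointwise quantities $1/|x-y|^2$ and $(|x|^2-|y|^2)/|x-y|^2$ are not uniformly controlled (e.g.\ near the diagonal or, for $K_n$, when $x,y$ are nearly collinear), so the bounds must be applied only \emph{after} the spherical averaging — this is precisely what Proposition \ref{prop.intest} is designed to allow, and why the argument goes through without any further assumptions on $w$ beyond finite total $Q$-curvature.
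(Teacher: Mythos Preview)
Your proposal is correct and follows essentially the same approach as the paper: compute $\Lap v$ and $\partial_\nu v$ pointwise, average over $\dB_r(0)$ using $\Lap\bar v=\overline{\Lap v}$ and $\bar v'(r)=\dashint\partial_\nu v\,d\sigma$, interchange the order of integration via Fubini, and then apply the bounds on $J_n$ and $K_n$ from Proposition~\ref{prop.intest} together with the finite total $Q$-curvature assumption. Your closing remark about the necessity of averaging before estimating is exactly the point.
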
   
\begin{proof}
This follows very similarly to the proof of \eqref{eq.asympf}. From the definition of $ \bar{v}(r)$ we get 
\begin{align*}
\Lap \bar v(r) = \dashint_{\dB_r(0)} \Lap v(x) \, d\sigma(x).
\end{align*}
Using \eqref{eq.Laplog} and \eqref{eq.defvw}, we obtain 
\begin{align*}
\Lap \bar v(r) &= \frac{1}{\gamma_n} \int_{\RR^n} \bigg( \, \dashint_{\dB_r(0)} \frac{2-n}{\abs{x - y}^2} \, d\sigma(x) \bigg) Q_{g,n}(y) e^{nw(y)} dy + \alpha\cdot \frac{n-2}{r^2}\\
&= \frac{1}{\gamma_n} \int_{\RR^n} (2-n) \, J_n(r,s) \, Q_{g,n}(y) \, e^{nw(y)} dy + \alpha\cdot \frac{n-2}{r^2}.
\end{align*}
Then from \eqref{eq.estJ}, we see that 
\begin{equation*}
\abs{\Lap \bar v(r)} \leq \frac{C}{r^2} \bigg( \, \int_{\RR^n}\abs{Q_{g,n}(y)}e^{nw(y)} dy + 1 \bigg) \leq \frac{C}{r^2},
\end{equation*}
as we have finite total $Q$-curvature. Similarly, using \eqref{eq.ddrlog}, we deduce that
\begin{align*}
r \frac{d}{dr} \bar v(r) &= r\, \dashint_{\dB_r(0)} \frac{d}{dr}v(x) d\sigma(x)\\
&= -\frac{1}{2\gamma_n} \int_{\RR^n} \bigg(1+ \dashint_{\dB_r(0)} \frac{\abs{x}^2-\abs{y}^2}{\abs{x - y}^2} \, d\sigma(x) \bigg) Q_{g,n}(y) e^{nw(y)} dy + \alpha\\
&= -\frac{1}{2\gamma_n} \int_{\RR^n} \big(1+K_n(r,s)\big) Q_{g,n}(y) \, e^{nw(y)} dy + \alpha.
\end{align*}
Hence, Equation \eqref{eq.estK}, the assumption of finite total $Q$-curvature, and the rotational symmetry of $\bar{v}(r)$ imply
\begin{equation*}
\abs{\nabla \bar v(r)} = \bigg\lvert \frac{d}{dr}\bar v(r)\bigg\rvert \leq \frac{C}{r} \bigg( \, \int_{\RR^n}\abs{Q_{g,n}(y)}e^{nw(y)} dy + 1 \bigg) \leq \frac{C}{r}.
\end{equation*}
This establishes the lemma.
\end{proof}

Obviously, if $g =e^{2w}\abs{dx}^2$ is a generalised normal metric on $ \RR^n\setminus\{0\}$, then $v=w$ and hence \eqref{eq.asympv} gives the desired bounds for $\bar w$. This allows us to now prove Theorem \ref{thm.Rn} under the assumption that $g=e^{2w}\abs{dx}^2$ is a generalised normal metric.

\begin{proof}[Proof of Theorem \ref{thm.Rn} for generalised normal metrics]
Let $g$ be a generalised normal metric with average metric $\bar g$ (see Definition \ref{defn.normal}). Corollary \ref{cor.finitearea}, Lemma \ref{lemma.growth} and the results from the last section show that Theorem \ref{thm.Rn} holds for the rotationally symmetric metric $\bar g$. Moreover, Corollary \ref{cor.volumes} implies that
\begin{align*}
\lim_{r\to 0} C_{n-1,n}(r) &= \lim_{r\to 0} \bar{C}_{n-1,n}(r),\\
\lim_{r\to \infty} C_{n-1,n}(r) &= \lim_{r\to \infty} \bar{C}_{n-1,n}(r),
\end{align*}
where the isoperimetric ratios (given in Definition \ref{defn.ratios}) are taken with respect to $g$ and $\bar g$, respectively. Thus, in order to obtain Theorem \ref{thm.Rn} for the generalised normal metric $g$, we need to only show that
\begin{equation*}
\int_{\RR^n} Q_{g,n} \, dV_g = \int_{\RR^n} Q_{g,n} \, e^{nw} \, dx = \int_{\RR^n} Q_{\bar g,n} \, e^{n\bar w} dx = \int_{\RR^n} Q_{\bar g,n} \, dV_{\bar g}.
\end{equation*}
However, from
\begin{equation*}
2 Q_{\bar{g},n} e^{n \bar w} = (-\Lap)^{n/2} \bar w = \dashint_{\dB_r(0)} (-\Lap)^{n/2} w \, d\sigma =\dashint_{\dB_r(0)} 2Q_{g,n} \, e^{nw} \, d\sigma,
\end{equation*}
this follows immediately.
\end{proof}


\section{Singularity Removal Theorem}\label{sect.general}

Let $g = e ^{2w} \abs{dx}^2$ be a metric on $ \RR^n \setminus \{0\} $ satisfying the assumptions of Theorem \ref{thm.Rn}. In this section, we show that then $g$ is a generalised normal metric. Together with the results from Section \ref{sect.normal}, this completes the proof of Theorem \ref{thm.Rn}.

\begin{prop} \label{prop_gen_metric}
Suppose that the metric $g = e^{2w} \abs{dx}^2$ is a complete finite area metric on $\RR^n \setminus \{0\}$ with finite total $Q$-curvature
\begin{align*}
\int _{ \RR ^ n } \abs{Q_{g,n}} \, e ^{ n w } \, dx < \infty 
\end{align*} 
and non-negative scalar curvature at infinity and at the origin. Then it is a generalised normal metric in the sense of Definition \ref{defn.normal} 
\end{prop}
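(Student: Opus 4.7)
The plan is to define the log-potential
\[
v(x) := \frac{1}{\gamma_n}\int_{\RR^n}\log\Big(\frac{\abs{y}}{\abs{x-y}}\Big)\,Q_{g,n}(y)\,e^{nw(y)}\,dy,
\]
which is absolutely convergent thanks to finite total $Q$-curvature, and to prove that $h := w - v$ coincides with $\alpha\log\abs{x} + C$ for some $\alpha,C\in\RR$; this is precisely the defining relation \eqref{eq.defnormal}. By \eqref{eq.fundLapn2}, differentiation under the integral yields $(-\Lap)^{n/2}v = 2Q_{g,n}e^{nw}$ distributionally, so by \eqref{eq.Qn} we have $(-\Lap)^{n/2}h = 0$ on $\RR^n\setminus\{0\}$.

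The main body of the argument is then to establish that $h$ is radially symmetric. Expanding $h = \sum_{\ell\geq 0}h_\ell(r)Y_\ell(\sigma)$ in a basis of spherical harmonics and using \eqref{eq.Lapspherical}, each radial component satisfies a linear ODE of order $n$ whose fundamental solutions are the power functions $r^{\ell+2k}$ and $r^{2-n-\ell+2k}$ for $k=0,\ldots,\tfrac{n}{2}-1$, with logarithmic corrections at coinciding exponents (for $\ell=0$ this recovers exactly \eqref{eq.soluformula}). For $\ell\geq 1$ every basis element grows at least like $r$ at infinity or blows up at least like $r^{-1}$ at the origin. I would rule out all such modes by combining the bounds of Lemma \ref{lemma.growth} on $\bar v$ with the scalar-curvature hypothesis \eqref{eq.posscalar}, rewritten through \eqref{eq.formulaR} as a one-sided control on $\abs{x}^2\Lap w + (\tfrac{n}{2}-1)(\abs{x}\abs{\nabla w})^2$, to show that $w$ admits no super-logarithmic behaviour at either end, forcing $h_\ell\equiv 0$ for every $\ell\geq 1$.

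Once $h$ is radial, \eqref{eq.soluformula} gives $h(r) = c_1 + c_n\log r + \sum_{k=1}^{n/2-1}\bigl(c_{2k+1}r^{2k} + c_{n-2k}r^{-2k}\bigr)$, and the proof of Lemma \ref{lemma.wisfa} applies verbatim: the scalar-curvature sign at infinity and at the origin, evaluated against the background bounds \eqref{eq.asympv} on $\bar v$, forces each power-law coefficient to vanish. What remains is $w(x) = v(x) + \alpha\log\abs{x} + C$ with $\alpha = c_n$ and $C = c_1$, so $g$ is a generalised normal metric.

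The hard part is the second step, the removal of the higher spherical-harmonic modes. In four dimensions \cite{BuzaNg1} a single application of B\^ocher's theorem to the harmonic function $-\Lap h$ was enough; here the polyharmonic order is $n$ and each mode $Y_\ell$ admits $n$ linearly independent growing or singular radial behaviours, so plain B\^ocher does not suffice. My intention is to iterate the operator $-\Lap$: at each stage the growth bounds propagated from the scalar-curvature constraint and the potential estimates of Lemma \ref{lemma.growth} eliminate the newly exposed fundamental solutions, and after $\tfrac{n}{2}-1$ such reductions the problem is reduced to a harmonic function on $\RR^n\setminus\{0\}$ satisfying the growth bound $\abs{x}^2\abs{\Lap h} = O(1)$, at which point the classical B\^ocher theorem applies directly and closes the argument.
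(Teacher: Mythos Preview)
Your outline is right in spirit, and the final step (once $h=w-v$ is known to be radial, invoke the argument of Lemma \ref{lemma.wisfa}) matches the paper. The gap is in the middle step: the elimination of the non-radial modes does not go through with the tools you cite. The scalar-curvature condition \eqref{eq.formulaR} is a pointwise \emph{nonlinear} inequality on $w$ and does not decouple under a spherical-harmonic expansion about the origin, so it gives no direct control on an individual coefficient $w_\ell(r)$. Lemma \ref{lemma.growth} only bounds $\bar v$, i.e.\ the $\ell=0$ mode of $v$; it says nothing about $v_\ell$ for $\ell\geq 1$, hence nothing about $h_\ell=w_\ell-v_\ell$. Your proposed iteration of $-\Lap$ does not repair this: at each stage you would need two-sided growth bounds on $(-\Lap)^k h$ (or on its modes), and neither hypothesis supplies them. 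The final appeal to B\^ocher also presupposes a sign or a growth bound that you have not established.

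The paper avoids all of this by symmetrising about an \emph{arbitrary} centre $x_0$ instead of decomposing in harmonics about the origin. The key fact (Lemma \ref{lemma.scalarx0}, via Jensen applied to $\abs{\nabla w}^2$) is that the averaged metric $\bar g_{x_0}=e^{2\bar w_{x_0}}\abs{dx}^2$ again has non-negative scalar curvature at infinity. Hence for each $x_0$ the radial function $\psi_{x_0}$, the symmetrisation of $w-v-\alpha\log\abs{\cdot}$ about $x_0$, satisfies $(-\Lap)^{n/2}\psi_{x_0}=0$ and inherits precisely the one-sided control needed to run the argument of Lemma \ref{lemma.wisfa}, forcing $\psi_{x_0}$ to be constant. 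Evaluating $\Lap\psi_{x_0}$ at $x=x_0$ gives $\Lap(w-v-\alpha\log\abs{x})\big|_{x_0}=0$; varying $x_0$ shows that $w-v-\alpha\log\abs{x}$ is harmonic on $\RR^n\setminus\{0\}$, and a Liouville argument (bounding the spherical average of $\abs{\nabla w}^2$ through \eqref{eq.Rconf} and the sign of $\R_g$) finishes. Moving the centre of symmetrisation is exactly the device that converts the nonlinear scalar-curvature sign into usable information at every point, replacing the mode-by-mode analysis that the available hypotheses do not support.
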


For $g = e^{2w} \abs{dx}^2$, we define the symmetrisation of $w$ with respect to $ x _0$ by 
\begin{align*}
\bar w _{ x _0 } ( x ) = \dashint _{ \dB _r (x_0) }   w (y) \, d \sigma (y), \quad \text{ where } r = \abs{x - x _0}.
\end{align*}
Clearly $ \bar w_{ x_0 } $ is rotationally symmetric with respect to $x_0$. If $ x_0 = 0$ then we will often write $ \bar w _{ 0}( x ) = \bar w (x)$. 

\begin{lemma}\label{lemma.scalarx0}
Let $g = e^{2w} \abs{dx}^2$ be a complete finite area metric on $\RR^n \setminus \{0\}$ with finite total $Q$-curvature and non-negative scalar curvature at infinity and at the origin. Then for $x_0$ close enough to the origin, the symmetrised metric $\bar{g}_{x_0} = e ^{  2 \bar w_{x_0}} \abs{dx}^ 2$ has finite total $Q$-curvature and non-negative scalar curvature at infinity and at the origin.
\end{lemma}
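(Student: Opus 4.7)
The plan is to verify the two conclusions of the lemma separately -- finite total $Q$-curvature and non-negative scalar curvature at infinity and at $x_0$, where $x_0$ is the centre of symmetry of $\bar g_{x_0}$ and plays the role of the ``origin'' for the symmetrised metric. The key tool throughout is that spherical averaging around $x_0$ commutes with the flat Laplacian (this follows because $\Lap$ commutes with rotations about $x_0$), and hence with $(-\Lap)^{n/2}$. Consequently,
\begin{equation*}
2 Q_{\bar g_{x_0},n}\, e^{n\bar w_{x_0}}(x) = (-\Lap)^{n/2}\bar w_{x_0}(x) = \dashint_{\dB_r(x_0)} (-\Lap)^{n/2} w \, d\sigma = \dashint_{\dB_r(x_0)} 2 Q_{g,n}\, e^{nw} \, d\sigma,
\end{equation*}
with $r = \abs{x-x_0}$. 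Taking absolute values, passing to spherical coordinates centred at $x_0$, and applying Fubini gives
\begin{equation*}
\int_{\RR^n} \abs{Q_{\bar g_{x_0},n}}\, e^{n\bar w_{x_0}}\, dx \leq \int_{\RR^n} \abs{Q_{g,n}}\, e^{nw}\, dx < \infty,
\end{equation*}
which takes care of the finite-total-$Q$-curvature statement.

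For the scalar curvature I would use the conformal formula \eqref{eq.Rconf}. Since $\bar w_{x_0}$ is radial around $x_0$, the gradient $\abs{\nabla \bar w_{x_0}}$ equals the radial derivative $\tfrac{d}{dr}\bar w_{x_0}$, which after differentiating under the integral sign is the spherical average over $\dB_r(x_0)$ of the outward normal derivative of $w$. Jensen's inequality therefore yields
\begin{equation*}
\abs{\nabla \bar w_{x_0}}^2 \leq \dashint_{\dB_r(x_0)} \abs{\nabla w}^2 \, d\sigma.
\end{equation*}
Combined with the equality $\Lap \bar w_{x_0} = \dashint_{\dB_r(x_0)} \Lap w \, d\sigma$, and observing that the coefficient $-2(n-1)(\tfrac{n}{2}-1)$ of the gradient term in \eqref{eq.Rconf} is negative (so the Jensen inequality flips sign when multiplied through), I obtain
\begin{equation*}
\R_{\bar g_{x_0}}\, e^{2\bar w_{x_0}} \geq \dashint_{\dB_r(x_0)} \R_g\, e^{2w}\, d\sigma.
\end{equation*}
Now choosing $x_0$ close to the origin, say $\abs{x_0} < \tfrac{1}{2}\min\{r_1,r_2\}$, the sphere $\dB_r(x_0)$ is contained in $\RR^n\setminus B_{r_1}(0)$ whenever $r > 2r_1$ and in $B_{r_2}(0)$ whenever $r < \tfrac{1}{2} r_2$. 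In both cases the averaged integrand on the right-hand side is non-negative by the scalar curvature assumption on $g$, so $\R_{\bar g_{x_0}}(x) \geq 0$ at infinity and in a neighbourhood of $x_0$, respectively.

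The main obstacle I anticipate is justifying the interchange of $(-\Lap)^{n/2}$ with spherical averaging given that $w$ is singular at the origin, since the averaging spheres of radius exactly $\abs{x_0}$ pass through this singular point. Away from the codimension-one set $\{x : \abs{x-x_0}=\abs{x_0}\}$ the identity holds pointwise since $w$ is smooth on $\RR^n\setminus\{0\}$, and the finite total $Q$-curvature and finite area assumptions supply enough integrability to rule out any singular distributional contribution concentrated on this exceptional sphere. The scalar curvature inequality is similarly unaffected as it is a pointwise statement valid off this measure-zero set, which is all one needs for the asymptotic sign conclusions.
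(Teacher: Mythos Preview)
Your argument is correct and follows essentially the same route as the paper: you use that spherical averaging commutes with $\Lap$ together with Jensen's inequality on the gradient term in \eqref{eq.Rconf} to deduce $\R_{\bar g_{x_0}} e^{2\bar w_{x_0}} \geq \dashint_{\dB_r(x_0)} \R_g e^{2w}\,d\sigma$, and Fubini for the $Q$-curvature bound. Your added care about the exceptional sphere through the origin and the explicit radii for $x_0$ are details the paper leaves implicit.
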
 

\begin{proof} 
If $ g = e ^{2w} \abs{dx}^ 2$ is a metric conformal to the Euclidean metric then by \eqref{eq.Rconf} non-negative scalar curvature $ \R _g \geq 0$ is equivalent to 
\begin{align*}
\Lap w + \big( \tfrac n 2 - 1 \big) \abs{\nabla w}^ 2 \leq 0.
\end{align*} 
Firstly, note that we have 
\begin{align*}
\Lap \bar w _{x_0}(x ) = \Lap \dashint _{ \dB_r (  x_ 0 )}   w ( y) \, d\sigma (y) = \dashint _{ \dB _r ( x_0)}   \Lap w (y) \, d \sigma (y).
\end{align*}
Furthermore, as $\nabla = (\frac{d}{dr}, \frac{1}{r} \nabla_{\Sph^{n-1}})$ in spherical coordinates around $x_0$, we have 
\begin{equation*}
\abs{ \nabla \bar w _{ x_0}}^ 2 = \Big\lvert \frac{d}{dr} \bar w _{ x_0} \Big\rvert^ 2  = \bigg\lvert  \dashint _{ \dB_r(x_0)}   \frac{d}{dr} w \, d \sigma  \bigg\rvert ^ 2 \leq \dashint _{ \dB_r(x_0)}  \Big\lvert \frac{d}{dr} w \Big\rvert^ 2 d \sigma \leq \dashint _{ \dB_r(x_0)}  \abs{\nabla w }^ 2 d \sigma,
\end{equation*}
and therefore 
\begin{equation}\label{eq.Rinteq}
\Lap \bar w_{x_0} + \big( \tfrac n 2 - 1 \big) \abs{ \nabla \bar w_{x_0} }^ 2 \leq \dashint _{ \dB_r(x_0)}  \Big(\Lap w + \big( \tfrac n 2 - 1 \big) \abs{\nabla w}^ 2\Big) d \sigma.
\end{equation}
For $x_0$ and $x$ sufficiently close to the origin, the integrand on the right hand side of \eqref{eq.Rinteq} is non-positive and hence $\R_{\bar g_{x_0}}(x) \geq 0$ if $x_0$ and $x$ are sufficiently close to the origin. Moreover, $\R_{\bar g_{x_0}}(x) \geq 0$ also holds whenever $x$ is sufficiently large. Furthermore $Q_{ \bar g_{x_0}, n }$ is absolutely integrable with respect to $dV _ {\bar  g_{x_0}}$. This follows from
\begin{equation*}
(-\Lap ) ^ { n/ 2 } \bar w_{x_0} = 2 Q_{ \bar g_{x_0}, n } \, e ^ { n \bar w_{x_0} },
\end{equation*}
which implies by Fubini's theorem
\begin{align*}
2\int _{ \RR ^ n } \abs{Q _{ \bar g_{x_0} ,n }} dV_ {  \bar g_{x_0} ,n } &  =\int_{\RR^n} 2 \abs{Q_{ \bar g_{x_0}, n }} \, e ^ { n \bar w_{x_0} } dx =\int _{ \RR ^ n } \big\lvert  ( - \Lap ) ^{ n /2 } \bar w_{x_0}  \big\rvert dx \\
& = \int _{ \RR^ n } \bigg\lvert \, \dashint _{ \dB_r(x_0)}  ( -\Lap ) ^  {n/2 } w ( y ) \, d \sigma (y) \bigg\rvert d x \\
& \leq \int _{ \RR ^ n } \dashint _{ \dB_r(x_0)}  \big\lvert  ( - \Lap ) ^{ n /2 } w(y)  \big\rvert  d \sigma (y) \, dx \\
& = 2 \int _{ \RR^n } | Q_{ g, n } | e^ { n w } d x < \infty.
\end{align*}
Hence $ \bar g_{ x_0 }$ has finite total $Q$-curvature.
\end{proof}

Now, let us consider 
\begin{equation}\label{eq.defv2}
v(x):=\frac{1}{\gamma_n} \int_{\RR^n}\log\Big(\frac{\abs{y}}{\abs{x-y}}\Big)\, Q_{g,n}(y)\,e^{nw(y)}\,dy
\end{equation}
and its symmetrisation
\begin{equation*}
\bar v _{ x_0 } ( x)  = \dashint _{ \dB_r(x_0)}  v (y) \, d \sigma(y), \qquad \text{ where }r = \abs{x -x _0}.
\end{equation*} 
Then, since $ ( -\Lap ) ^{n/2} v = 2Q _{ n ,g } e ^{ nw } = ( -\Lap ) ^{n/2} w$, we also have
\begin{equation*}
( - \Lap  ) ^ { n /2 } \bar v_{ x_0 } (x) = \dashint _{ \dB_r(x_0)}  ( -\Lap ) ^{ n/2} v (y) \, d \sigma(y) = (-\Lap )^{n/2} \bar w _{ x_0 } ( x ).
\end{equation*}
Considering first $x_0=0$, we note that by Lemma \ref{lemma.scalarx0}, $\bar{g} = e^{2 \bar w} \abs{dx}^ 2$ has non-negative scalar curvature at infinity and at the origin and by Lemma \ref{lemma.growth}, $\bar v$ satisfies
\begin{equation*}
\abs{\Lap \bar v(r)} \leq \frac {C}{r^2}, \qquad \abs{\nabla \bar v(r)} \leq \frac{C}{r},
\end{equation*}
for some constant $C \in \RR$. Thus, by an argument as in Lemma \ref{lemma.wisfa}, we obtain
\begin{equation*}
\bar w(x) = \bar v(x) + C_0 + \alpha \log r,
\end{equation*}
for some constants $C_0$ and $\alpha$. Further note that for $x_0 \neq 0$, the Lebesgue differentiation theorem implies that 
\begin{equation*}
\lim_{ r \to 0 } \bar w _{x_0 }  (x)  = w ( x_0),
\end{equation*} 
where $r = \abs{x - x_0}$. Therefore, we obtain
\begin{equation}\label{eq.limit}
\lim_{x \to 0} ( w ( x) - v ( x ) - \alpha \log \abs{x} ) = C _0.
\end{equation}

We can now prove the main result of this section.

\begin{proof} [Proof of Proposition \ref{prop_gen_metric}]
Let $v$ be given by \eqref{eq.defv2} and recall from the above that
\begin{align*}
(-\Lap)^{n/2} ( w - v)= 0 \qquad \text{ on }\RR^n \setminus \{ 0\}.
\end{align*}
Over the origin $(-\Lap)^{n/2} ( w - v)$ must equal the sum of Dirac measures and it's derivatives as they are the unique distributions supported on a point. In view of \eqref{eq.limit}, we conclude that
\begin{equation}\label{eq.Lapn2}
(-\Lap)^{n/2} (w(x) - v(x)  - \alpha \log |x| ) = 0 \qquad \text{ on }\RR^n.
\end{equation}
We first prove the following claim.

\setcounter{claim}{0}
\begin{claim}\label{claimseq5}
$w ( x) - v ( x ) - \alpha \log \abs{x}$ is harmonic on $\RR^n\setminus \{0\}$.
\end{claim}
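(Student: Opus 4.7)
The plan is to show that the spherical averages of $h := w - v - \alpha \log \abs{x}$ around any point $x_0 \in \RR^n \setminus \{0\}$ are constant in the radius, which by the mean value characterisation of harmonic functions yields the claim.

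First I note that, by \eqref{eq.Lapn2}, $(-\Lap)^{n/2} h = 0$ as distributions on $\RR^n$, and by \eqref{eq.limit}, $h$ extends continuously across the origin with value $C_0$. Elliptic regularity for the constant-coefficient operator $(-\Lap)^{n/2}$ then makes $h$ real-analytic on all of $\RR^n$. For each $x_0 \in \RR^n \setminus \{0\}$, the spherical averages
\begin{equation*}
\bar h_{x_0}(x) := \dashint_{\dB_\rho(x_0)} h(y)\,d\sigma(y), \qquad \rho := \abs{x - x_0},
\end{equation*}
are smooth on $\RR^n$, rotationally symmetric about $x_0$, and polyharmonic of the same order. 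The translated structure theorem \eqref{eq.soluformula} together with smoothness at $\rho = 0$ (which kills all negative powers of $\rho$ and the $\log\rho$ term) then forces the expansion
\begin{equation*}
\bar h_{x_0}(x) = \sum_{k=0}^{n/2-1} c_{2k+1}(x_0)\, \rho^{2k}.
\end{equation*}

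The main step will be to show $c_{2k+1}(x_0) = 0$ for all $k \geq 1$, in the spirit of Lemma \ref{lemma.wisfa}. Writing $\bar w_{x_0} = \bar v_{x_0} + \alpha\,\overline{\log\abs{\cdot}}_{x_0} + \bar h_{x_0}$, translated versions of Proposition \ref{prop.intest} (in particular the bound on $J_n$ with its arguments interchanged) together with the argument of Lemma \ref{lemma.growth} give that both $\rho^2 \abs{\Lap \bar v_{x_0}}$ and $(\rho\abs{\nabla \bar v_{x_0}})^2$ are uniformly bounded, and similarly for the $\alpha\,\overline{\log\abs{\cdot}}_{x_0}$ contribution. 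If some $c_{2k+1}(x_0) \neq 0$ with maximal index $k_{\max} \geq 1$, then the contribution of $\bar h_{x_0}$ to $(\rho\abs{\nabla \bar w_{x_0}})^2$ has dominant term $4k_{\max}^2 c_{2k_{\max}+1}^2 \rho^{4k_{\max}}$ as $\rho \to \infty$, while all cross-terms with the bounded pieces grow only like $\rho^{2k_{\max}}$. The identity \eqref{eq.Rconf} applied to $\bar g_{x_0}$,
\begin{equation*}
\rho^2 \R_{\bar g_{x_0}} e^{2\bar w_{x_0}} = -2(n-1)\bigl(\rho^2 \Lap \bar w_{x_0} + (\tfrac{n}{2}-1)(\rho\abs{\nabla \bar w_{x_0}})^2\bigr),
\end{equation*}
would then force $\rho^2 \R_{\bar g_{x_0}} e^{2\bar w_{x_0}} \to -\infty$, contradicting the non-negativity of $\R_{\bar g_{x_0}}$ at infinity, which follows for any $x_0 \neq 0$ by the argument of Lemma \ref{lemma.scalarx0} applied outside $B_{r_1 + \abs{x_0}}(0)$. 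Hence $\bar h_{x_0}(x) \equiv c_1(x_0) = h(x_0)$, so $h$ satisfies the spherical mean value property at every $x_0 \in \RR^n \setminus \{0\}$ and the claim follows.

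The hard part will be setting up the translated integral estimates around a generic centre $x_0 \neq 0$ and carefully tracking the cross-terms so that the positive leading contribution from $\bar h_{x_0}$ is not cancelled by the $\bar v_{x_0}$ and $\overline{\log\abs{\cdot}}_{x_0}$ pieces. Once this growth analysis is in place, the scalar-curvature contradiction proceeds essentially as in \cite[Lemma 2.2]{BuzaNg1} and in the proof of Lemma \ref{lemma.wisfa} above.
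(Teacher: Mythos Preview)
Your proposal is correct and follows essentially the same route as the paper: both symmetrise $h = w - v - \alpha\log\abs{x}$ about an arbitrary centre $x_0$, use the radial polyharmonic structure formula \eqref{eq.soluformula} to expand $\bar h_{x_0}$, kill the singular terms by boundedness at $x_0$, and kill the positive-power terms via the scalar-curvature contradiction of Lemma~\ref{lemma.wisfa} combined with the growth bounds of Lemma~\ref{lemma.growth} on $\bar v_{x_0} + \alpha\,\overline{\log\abs{\cdot}}_{x_0}$. The only cosmetic differences are that you invoke elliptic regularity for $(-\Lap)^{n/2}$ to get smoothness of $h$ across the origin (the paper instead just uses boundedness of $\psi_{x_0}$ at $x_0$ to rule out the negative-power and $\log$ terms directly), and you conclude via the mean-value characterisation of harmonicity whereas the paper reads off $\Lap h(x_0) = \Lap\psi_{x_0}\big|_{x_0} = 0$; these are equivalent.
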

\begin{proof}
For arbitrary $x_0$, we consider the symmetrisation
\begin{equation*}
\psi_{x_0} (x) = \bar w _{x_0}(x) - \bar v_{x_0}(x) - \alpha \, \overline{ \log} \abs{x} =\dashint _{ \dB_r(x_0)} (w(y) - v(y) - \alpha \log \abs{y})\,  d \sigma(y), 
\end{equation*}
where $r = \abs{x-x_0}$. As the integrand is bounded over the origin by \eqref{eq.limit}, $\psi_{x_0}$ is well defined. Furthermore, $\psi_{ x_0 }$ is rotationally symmetric about $x_0$ and by \eqref{eq.Lapn2}, we have
\begin{align*}
(-\Lap )^{n/2}  \psi_{ x_0 }(x) = 0. 
\end{align*}
Hence, by \eqref{eq.soluformula}, we see  that for $r = \abs{x-x_0}$
\begin{align*}
\psi_{ x_0}(x) = c_1 + c_n\log r + \sum_{k=1}^{\frac{n}{2}-1} c_{2k+1} r^{2k} + \sum_{k=1}^{\frac{n}{2}-1} c_{n-2k} r^{-2k}.
\end{align*}  
We want to conclude that all coefficients apart from $c_1$ vanish. As $\psi_{x_0}$ is bounded at $x_0$, all the even-index coefficients $c_{n-2k}$ (including $c_n$) have to vanish. For the odd-index coefficients $c_{2k+1}$, one can easily reproduce the arguments from Lemma \ref{lemma.wisfa}, noting that by Lemma \ref{lemma.scalarx0}, $\bar{g}_{x_0} = e^{2 \bar w_{x_0}} \abs{dx}^ 2$ has non-negative scalar curvature at infinity and by an argument as in Lemma \ref{lemma.growth}, $\bar v_{x_0} + \alpha \, \overline{ \log} \abs{x}$ satisfies
\begin{equation*}
\abs{\Lap (\bar v_{x_0} + \alpha \, \overline{ \log} \abs{x})} \leq \frac {C}{r^2}, \qquad \abs{\nabla (\bar v_{x_0} + \alpha \, \overline{ \log} \abs{x})} \leq \frac{C}{r},
\end{equation*}
for some constant $C$. This shows that $\psi_{x_0}(x)\equiv c_1$. (Of course, in view of \eqref{eq.limit}, we have $c_1 = C_0$.) Let us point out here that we do not need that $\bar{g}_{x_0} = e^{2 \bar w_{x_0}} \abs{dx}^ 2$ has non-negative scalar curvature at the origin, hence smallness of $\abs{x_0}$ is not needed.\\

Now we conclude that 
\begin{equation*}
\Lap (w(x) - v(x)  - \alpha \log \abs{x} )\big|_{x=x_0} =  \Lap ( \bar w _{ x_0 }(x) - \bar v _{ x_0}(x) - \alpha\,\overline{ \log} \abs{x})\big|_{x=x_0} = \Lap \psi_{x_0}(x) \big|_{x=x_0} = 0.
\end{equation*}
As $x_0$ was arbitrary, this means that $w(x)  - v(x)  - \alpha \log \abs{x}$ is harmonic, proving Claim \ref{claimseq5}. 
\end{proof}

To complete the proof of the proposition, we show that $w(x) - v(x) - \alpha \log \abs{x}$ is in fact a constant. This part of the proof is similar to the four-dimensional case \cite[Section 4]{BuzaNg1}.\\

Since $w(x) - v(x) - \alpha \log \abs{x}$ is harmonic, it follows that 
\begin{align*}
\frac{\partial }{\partial x_k} (w(x) - v(x) - \alpha \log \abs{x})
\end{align*}
is also harmonic. Therefore, using the mean value formula, we get 
\begin{align*}
\bigg\lvert \frac{\partial }{ \partial x_k} (w(x) - v(x) - \alpha \log \abs{x}) \bigg\rvert^ 2 & = \bigg\lvert \, \dashint _{ \dB_r ( x_0) } \frac{\partial }{\partial x_k} (w(x) - v(x) - \alpha \log \abs{x}) \, d \sigma (x)\bigg\rvert^ 2 \\
& \leq  \dashint _{ \dB_r ( x_0) } \Big\lvert \nabla  (w(x) - v(x) - \alpha \log \abs{x}) \Big\rvert^2 d \sigma (x) \\
& \leq C \,  \dashint _{ \dB_r ( x_0) } \Big( \abs{\nabla w(x)}^ 2 + \abs{\nabla v(x)}^ 2 + \abs{\nabla \log\abs{x} }^ 2\Big) d \sigma (x)
\end{align*}
where $ r = \abs{x -x_0}$. Now using the representation formula for $ v (x)$ we get that 
\begin{align*}
&\limsup_{r \to \infty }\, r^2 \dashint _{ \dB_r (x_0) }   \abs{\nabla v}^2 d\sigma (x)\\
&\quad \leq C \limsup _{ r \to \infty }\, r^2 \dashint _{ \dB_r (x_0) } \int_{\RR^n} \frac {\abs{( -\Lap)^{n/2} w(y)}}{\abs{x -y}^2}\, dy\, d \sigma (x)  \int_{\RR^n} \abs{( -\Lap)^{n/2} w (y)} \,dy < \infty. 
\end{align*}
This is due to the fact that
\begin{align*}
\limsup_{ r \to \infty}\, r ^ 2 \dashint _{\dB_r (x_0) } \frac{1}{\abs{x- y}^2 }\, d \sigma(x) \leq C
\end{align*}
by \eqref{eq.estJ}, for any $y\in\RR^n$. Furthermore, using 
\begin{equation*}
\inf_{\RR^n\setminus B_{\tilde{r}_1}(x_0)} \R_g(x) \geq 0
\end{equation*} 
and $ \Lap w(x) = \Lap v(x)  + \alpha\Lap \log \abs{x}$, we obtain from \eqref{eq.Rconf} that
\begin{align*}
\dashint_{ \dB_r ( x_0) }  \abs{\nabla w}^ 2 d \sigma (x) & = \frac {2}{n-2}\; \dashint_{\dB_r ( x_0) } \bigg( \Lap w - \frac{\R_g e^{2w}}{2(n-1)} \bigg) d \sigma(x) \\
&\leq \frac {2}{n-2}\; \dashint_{\dB_r (x_0) } ( \Lap v + \alpha \Lap \log \abs{x} )\, d \sigma(x)\\
& = \dashint_{\dB_r (x_0) }  \bigg( {- \frac{1}{\gamma_n}} \int_{ \RR^n } \frac{(-\Lap)^{n/2} w(y) }{\abs{x-y}^2 } \, dy + \frac{2\alpha}{\abs{x}^2} \bigg) d \sigma(x).
\end{align*}
This shows that 
\begin{align*}
\limsup_{r \to \infty}\;  r^2 \dashint_{\dB_r (x_0)}  \abs{\nabla w}^2  d\sigma(x) < \infty
\end{align*}
and hence 
\begin{align*}
\limsup_{r\to \infty}\; r^2 \Big| \frac{\partial}{\partial x_k} (w(x) - v(x) - \alpha \log\abs{x} ) \Big| < \infty
\end{align*}
which by Liouville's theorem gives that $w(x)  - v(x)  - \alpha \log\abs{x}$ is a constant.
\end{proof}


\section{Multiple ends and cone points}\label{sect.gluing}

In this section we extend the Chern-Gauss-Bonnet formula to cover the case of a domain conformal to $ \mathbb{S}^n$ with several ends and cone points as in Theorem \ref{thm.Mn}. 

\begin{proof}[Proof of Theorem \ref{thm.Mn}]
Let $ \Lambda = \{p_1, \ldots, p_k , q_1, \ldots, q_\ell \}$ be the set of ends and singular points and pick some arbitrary point $N \in \mathbb{S}^n \setminus \Lambda$. Then, we consider the stereographic projection $\pi : \mathbb{S}^n \setminus \{N\} \to \RR^n$ sending $N$ to infinity. We now identify $\Lambda = \{p_1, \ldots, p_k, q_1, \ldots, q _\ell \} $ with its images in $\RR^n$ under this stereographic projection and interpret the metric $g$ on $\Omega$ as a metric on $\RR^n \setminus \{p_1,\ldots,p_k\}$. Hence, there is a function $w$ that is smooth away from $\Lambda \subset \RR^n$ such that 
\begin{equation*}
 g = e^{2w} \abs{dx}^2.
\end{equation*}
Let us then fix a partition of unity $\{\varphi_i(x)\}_{i=0,\ldots k+\ell}$ consisting of smooth functions such that $\varphi_i$ has support in $B_{2R}(p_i)$ and satisfies $\varphi_i \equiv 1$ on $B_R(p_i)$ for $i =1, \ldots, k$, and $\varphi_j$ has support in $B_{2R}(q_j)$ and satisfies $\varphi_j \equiv 1$ on $B_R(q_j)$ for $j = k+1, \ldots k+\ell$. We can make all these balls of radius $2R$ disjoint by choosing $R$ small enough. The function $\varphi_0(x)$ is given by the condition
\begin{align*}
\varphi_0(x) + \varphi_1(x) + \ldots + \varphi_{k+\ell}(x)\equiv 1, \qquad \forall x\in\RR^n.
\end{align*}
Let $ w _i(x) = w (x) \varphi_ i (x)$ and consider the metrics 
\begin{align*}
 g _i = e ^{ 2 w _i } \abs{dx}^2.
\end{align*}  
By assumption, the metrics $g_i$, $i =1, \ldots, k$, are complete with non-negative scalar curvature near the end $p_i$. Moreover, as $w_i \equiv 0$ outside $B_{2R}(p_i)$, we see that $g_i$ has finite total $Q$-curvature and is complete at infinity with zero scalar curvature. Fixing some index $i$, we can assume without loss of generality that $p_i$ is the origin. We consider the symmetrisation  
\begin{align*}
\bar w _{ i} ( x ) = \dashint _{ \dB _r (0) }   w_i (y) \, d \sigma (y), \quad \text{ where } r = \abs{x}.
\end{align*}
Clearly, the arguments from Section \ref{sect.normal} and \ref{sect.general} go through, and we can therefore argue as in Lemma \ref{lemma.2ends} to conclude that
\begin{equation}\label{6.mu}
- \frac{1}{ \gamma_n} \int_{\RR^n} Q_{g_i,n} \, dV_{g_i} = 1+ \nu_i, 
\end{equation} 
where 
\begin{equation*}
\nu_i = \lim_{r\to 0} \frac{\vol_{g_i}(\dB_r(p_i))^{n/(n-1)}}{n\, \sigma_n^{1/(n-1)}\,\vol_{g_i}(B_R(p_i)\setminus B_r(p_i))} =  \lim_{r\to 0} \frac{\vol_g(\dB_r(p_i))^{n/(n-1)}}{n\, \sigma_n^{1/(n-1)}\,\vol_g(B_R(p_i)\setminus B_r(p_i))}
\end{equation*}
and where we used the fact that the Euclidean end at infinity has asymptotic isoperimetric ratio $1$ and that $\varphi_i \equiv 1$ on $B_R(p_i)$.\\

Similarly for $j=k+1,\cdots,k+\ell$, the metrics $g_j$ have a finite area branched point with non-negative scalar curvature near $q_j$ and a complete end with vanishing scalar curvature. By an argument as in Theorem \ref{thm.Rn}, we therefore find 
\begin{align*}
 \chi(\RR^n) - \frac{1}{ \gamma_n } \int_{\RR^n } Q_{g_j, n} \, dV_{g_j}= 1- \mu_j ,
\end{align*} 
or equivalently
\begin{equation}\label{6.nu}
- \frac{1}{ \gamma_n } \int_{\RR^n } Q_{g_j, n} \, dV_{g_j}= - \mu_j ,
\end{equation} 
where 
\begin{equation*}
\mu_j = \lim_{r\to 0} \frac{\vol_{g_j}(\dB_r(q_j))^{n/(n-1)}}{n\, \sigma_n^{1/(n-1)}\,\vol_{g_j}(B_r(q_j))} - 1 = \lim_{r\to 0} \frac{\vol_g(\dB_r(q_j))^{n/(n-1)}}{n\, \sigma_n^{1/(n-1)}\,\vol_g(B_r(q_j))} - 1.
\end{equation*}
Finally, let us also look at $g_0 = e^{2w_0}\abs{dx}^2$. Seen as a metric on $\mathbb{S}^n\setminus \{N\}$, which agrees with $g$ in a neighbourhood of $N$ and therefore can be smoothly extended to all of $\mathbb{S}^n$, we obtain by \eqref{eq.Qn} and \eqref{eq.CGBn}
\begin{equation}\label{6.null}
\chi(\mathbb{S}^n) = \frac{1}{\gamma_n} \int_{\mathbb{S}^n} Q_{g_0,n} \, dV_{g_0} = \frac{1}{2\gamma_n}\int_{\RR^n} (-\Lap)^{n/2} w_0 \, dx.
\end{equation}
Hence, combining the above formulae \eqref{6.mu}--\eqref{6.null} and using that $ w = \sum_{i=0}^ {k+\ell} w _i$, we obtain
\begin{align*}
- \frac{1}{\gamma _n}\int _{\Omega} Q_{g,n}\, dV_g  &= - \frac{1}{ 2\gamma _n} \int_{\RR^n\setminus \Lambda} ( -\triangle)^{n/2} w\, dx\\
&= - \frac{1}{2\gamma_n } \sum_{i=0}^{k+\ell} \, \int _{ \RR^n} (-\triangle ) ^{n/2} w _i \, dx\\
&= - \sum_{i=1}^{k+\ell} \,\frac{1}{\gamma_n}\int_{\RR^n} Q_{g_i,n} \, dV_{g_i} -\chi(\mathbb{S}^n)\\
&= \sum_{i=1}^k (1+\nu_i) - \sum_{j=1}^\ell \mu_j - \chi(\mathbb{S}^n),
\end{align*} 
or equivalently
\begin{equation*}
\chi(\mathbb{S}^n) - k - \frac{1}{\gamma_n}\int _{\Omega} Q_{g,n}\, dV_g = \sum_{i=1}^k \nu_i - \sum_{j=1}^\ell \mu_j.
\end{equation*}
In view of the observation that
\begin{equation*}
\chi(\Omega) = \chi(\mathbb{S}^n\setminus \{p_1,\ldots,p_k\}) = \chi(\mathbb{S}^n) - k,
\end{equation*}
Theorem \ref{thm.Mn} then immediately follows.
\end{proof}

\section*{Acknowledgements}
Parts of this work were carried out during a visit of RB at The University of Queensland in Brisbane as well as two visits of HN at Queen Mary University of London. We would like to thank the two universities for their hospitality. These visits have been financially supported by RB's EPSRC Grant number EP/M011224/1 and HN's AK Head Travelling Scholarship from the Australian Academy of Science.


\makeatletter
\def\@listi{%
  \itemsep=0pt
  \parsep=1pt
  \topsep=1pt}
\makeatother
{\fontsize{10}{11}\selectfont

}
\vspace{10mm}

Reto Buzano (M\"{u}ller)\\
{\sc School of Mathematical Sciences, Queen Mary University of London, London E1 4NS, United Kingdom}\\

Huy The Nguyen\\
{\sc School of Mathematical Sciences, Queen Mary University of London, London E1 4NS, United Kingdom}\\
\end{document}